\algnewcommand\algorithmicinput{\textbf{Input:}}
\algnewcommand\Input{\item[\algorithmicinput]}
\newcommand{\tbeta}{\tilde{\beta}}
\newcommand{\tg}{\tilde{g}}
\newcommand{\bg}{\bar{g}}
\newcommand{\rA}{\mathrm{(A)}}
\newcommand{\rB}{\mathrm{(B)}}
\newcommand{\rC}{\mathrm{(C)}}
\newcommand{\rD}{\mathrm{(D)}}
\newcommand{\rE}{\mathrm{(E)}}
\newcommand{\rOne}{\mathrm{(i)}}
\newcommand{\rTwo}{\mathrm{(ii)}}
\newcommand{\rThree}{\mathrm{(iii)}}
\newcommand{\amax}{a_{\max}}
\renewcommand{\le}{~\leq~}
\newcommand{\bx}{\bar{x}}
\renewcommand{\H}{\mathcal{H}}
\newcommand{\supp}{\textbf{support}}
\newcommand{\storm}{$\rm{STORM}$~}
\newcommand{\stormp}{$\rm{STORM}^{+}$~}
\newtheorem{Thm}{Theorem}
\newtheorem{Lem}[Thm]{Lemma}
\DeclareMathOperator*{\argmin}{arg\,min}
\def\reals{{\mathcal R}}
\newcommand{\D}{{\mathcal{D}}}
\newcommand{\ignore}[1]{}
\def\reals{{\mathbb R}}
\def\bold0{\mathbf{0}}
\newcommand\E{\mbox{\bf E}}
\def\bx{\mathbf{x}}
\newcommand{\eps}{\epsilon}
\newcommand{\norm}[1]{\left\| {#1} \right\|}
\renewcommand{\bx}{\bar{x}}
\title{$\rm{STORM}^{+}$: Fully Adaptive SGD with Momentum for Nonconvex Optimization}
\author{%
	Kfir Y. Levy \thanks{A Viterbi fellow. Corresponding author} \\
	Technion\\
	\texttt{kfirylevy@technion.ac.il} \\
	\And
	Ali Kavis \\
	EPFL\\
	\texttt{ali.kavis@epfl.ch}
	\And
	Volkan Cevher \\
	EPFL \\
	\texttt{volkan.cevher@epfl.ch} \\
}
\begin{document}

\maketitle

\begin{abstract}
In this work we investigate stochastic non-convex optimization problems where the objective is an expectation over smooth loss functions, and the goal is to find an approximate stationary point. The most popular approach to handling such problems is variance reduction techniques, which are also known to obtain tight convergence rates, matching the lower bounds in this case. Nevertheless, these techniques require a careful maintenance of anchor points in conjunction with appropriately selected ``mega-batchsizes". This leads to a challenging hyperparameter tuning problem, that weakens their practicality. Recently, \citep{cutkosky2019momentum} have shown that one can employ recursive momentum in order to avoid the use of anchor points and large batchsizes, and still obtain the optimal rate for this setting. Yet, their method called $\rm{STORM}$ crucially relies on the knowledge of the smoothness, as well a bound on the gradient norms. In this work we propose $\rm{STORM}^{+}$, a new method that is completely parameter-free, does not require large batch-sizes, and obtains the optimal $O(1/T^{1/3})$ rate for finding an approximate stationary point. Our work builds on the $\rm{STORM}$ algorithm, in conjunction with a novel approach to adaptively set the learning rate and momentum parameters.
\end{abstract}

\section{Introduction}
Over the past decade non-convex models have become  principal tools in ML (Machine Learning), and in data-science. This predominantly includes deep models, as well as Phase Retrieval \citep{candes2015phase}, non-negative matrix factorization \citep{hoyer2004non}, and matrix completion problems \citep{ge2016matrix} among others.

The main workhorse for training ML models is SGD (stochastic gradient descent) and its numerous variants.
One parameter that significantly affects the SGD performance is the learning rate, which often requires a careful and costly hyper-parameter tuning. 
Adaptive approaches to setting the learning rate like AdaGrad \citep{duchi2011adaptive} and Adam \citep{kingma2014adam} as well as non-adaptive heuristics~\citep{j2017on,he2019bag} are very popular in modern ML applications, yet these methods also require some tuning of  hyper-parameters like momentum and the scale of the learning rate schedule.

A popular  SGD heuristic that has proven to be crucial in many applications is the use of \emph{momentum}, i.e., the use of a weighted average of past gradients instead of the current gradient \citep{sutskever2013importance,kingma2014adam}. Although adaptive approaches to setting the momentum have been investigated in the past \citep{srinivasan2018adine,hameed2016back}, principled and theoretically-grounded approaches to doing so are less investigated. Another aspect that has not been extensively studied, which we take into account in this work, is the interplay between learning rate and momentum.

In this work we explore momentum-based adaptive and parameter-free methods for stochastic non-convex optimization problems. Concretely, we focus on the setting where the objective is an \emph{expectation over smooth losses} (see Eq.~\eqref{eq:exp-smooth-loss}), and the goal is to find an approximate stationary point. 

In the general case of smooth non-convex objectives it is known that one can approach a stationary point at a rate of $O(1/T^{1/4})$, where $T$ is the total number of samples \citep{ghadimi2013stochastic}. While this rate is optimal in the general case, it is known that one can obtain an improved rate of $O(1/T^{1/3})$ if the objective is an \emph{expectation over smooth losses}~\citep{fang2018near,zhou2018stochastic,cutkosky2019momentum,tran2019hybrid}. Besides, this rate was recently shown to be tight \citep{arjevani2019lower}.

Nevertheless, most of the methods developed for this setting rely on variance reduction techniques \citep{johnson2013accelerating,zhang2013linear,mahdavi2013mixed,wang2013variance}, which require careful maintenance of anchor points in conjunction with appropriately selected large batchsizes. This leads to a challenging  hyper-parameter tuning problem, weakening their practicality. One exception is the recent \storm algorithm of \cite{cutkosky2019momentum}. 

\storm  does not require large batches nor anchor points; instead, it uses a corrected momentum based gradient update that leads to implicit variance reduction, which in turn facilitates fast convergence. 
Unfortunately, none of the aforementioned methods (including \storm) is parameter-free. Indeed, the knowledge of smoothness parameter together with either the noise variance or a bound on the norm of the gradients are crucial to establish their guarantees. 

In this work, we essentially develop a parameter-free variant of \storm algorithm. We summarize our contributions as follows,
\begin{itemize}
\item We present \stormp, a \emph{parameter-free} momentum based method that ensures the optimal $O(1/T^{1/3})$ rate for the \emph{expectation over smooth losses} setting. Similarly to \storm, our method does not require large-batches nor anchor points.
\item \stormp implicitly adapts to the variance of the gradients. Concretely, it obtains  convergence rate of $O(1/\sqrt{T} + \sigma^{1/3}/T^{1/3})$, which recovers the optimal $O(1/\sqrt{T})$ rate in the noiseless case. 
We also improve over \storm by shaving off a $(\log T)^{3/4}$ factor from the $1/\sqrt T$ term.
\item In \stormp we demonstrate a novel way to set the learning rate by introducing an adaptive interplay between learning rate and momentum parameters. 
\end{itemize}

\section{Related Work}
In the context of stochastic non-convex optimization with general smooth losses, it was shown in \citet{ghadimi2013stochastic} that SGD with an appropriately selected learning rate can obtain a rate of $O(1/T^{1/4})$ for finding an approximate stationary point, which is known to match the respective lower bound \citep{arjevani2019lower}. 
While the method of \citet{ghadimi2013stochastic} requires knowledge of the smoothness and variance parameters, recent works have shown that adaptive methods like AdaGrad are able to obtain this bound in a parameter free manner, as well as to adapt to the variance of the problem \citep{li2019convergence, ward2019adagrad,j2018on}. These results, in a sense, explain the success of adaptive\footnote{An adaptive method is a method that updates its learning rate according to the (noisy) gradient feedback that it receives throughout the training process.} methods like AdaGrad \citep{duchi2011adaptive}, Adam \citep{kingma2014adam}, and RMSProp \citep{tieleman2012lecture} in handling non-convex problems.
 
 The idea of using variance reduction techniques for non-convex problems was first suggested in the context of finite sum problems by \citet{allen2016variance, reddi2016stochastic}, showing a rate of $O(1/T^{1/4})$. This was later improved by \citet{lei2017non} to a rate of  $O(1/T^{3/10})$.
The first works that have obtained the optimal $O(1/T^{1/3})$ for this setting were \citet{fang2018near, zhou2018stochastic}. Additionally,~\citet{fang2018near} shows that the same convergence behavior applies to the more general \emph{expectation over smooth losses} setting (see Eq.~\eqref{eq:exp-smooth-loss}) -- a setting that captures finite-sum problems as a private case.  
 
The \storm algorithm suggested in \citet{cutkosky2019momentum} is the first algorithm to obtain the optimal $O(1/T^{1/3})$ for this setting without the need to maintain anchor points or large batches. Instead, it relies on a clever correction of the momentum by making only one extra call to the oracle, which leads to an implicit variance reduction effect. Moreover, \storm adapts to the variance of the problem by obtaining a rate of $O((\log T)^{3/4}/{\sqrt{T}}+\sigma^{1/3}/T^{1/3})$ without any prior knowledge of variance. However, it needs to know the smoothness parameter and a bound on the gradient norms to set the step size and momentum parameters. Simultaneously to the work of \cite{cutkosky2019momentum}, another paper \citep{tran2019hybrid} have obtained the same optimal bound by proposing a similar update rule.
Note that \cite{tran2019hybrid} does calculate a single anchor point, and it still requires the knowledge of the smoothness and variance parameters.

\section{Setting and Preliminaries} \label{sec:PrelimsAndDefs}
We discuss stochastic non-convex optimization problems where the objective  $f:\reals^d\mapsto \reals$ is of the following form,
$$
f(x) : = \E_{\xi\sim\D}[f(x;\xi)]~,
$$
and $\D$ is an unknown distribution from which we may draw i.i.d.~samples. Our goal is to find an approximate stationary point of $f$, i.e. after $T$ draws from $\D$ we should output a point $\bar{x}\in \reals^d$ such that $\E\|\nabla f(x)\|\leq \text{Poly}(1/T)$. 

We focus on first order methods, i.e., methods that may access the gradients of $f(\cdot,\xi)$, and make the following assumptions regarding the noisy gradients and function values.

\textbf{Bounded values:}~~There exists $B>0$ such that,
\begin{align} \label{eq:bounded-values}
	\max_{x,y\in\reals^d}|f(x)-f(y)|\leq B.
\end{align}
\textbf{Bounded gradients:}~~There exists $G>0$ such that,
\begin{align} \label{eq:bounded-gradient}
	\|\nabla f(x;\xi)\|^2\leq G^2~;\quad \forall x\in\reals^d,\xi\in\supp\{\D\}.
\end{align}
\textbf{Bounded variance:}~~There exists $\sigma>0$ such that,
\begin{align} \label{eq:bounded-variance}
	\E\|\nabla f(x;\xi) - \nabla f(x)\|^2\leq \sigma^2~;\quad \forall x\in\reals^d.
\end{align}
\textbf{Expectation over smooth losses:} There exists $L>0$ such that,
\begin{align} \label{eq:exp-smooth-loss}
\|\nabla f(x;\xi)-\nabla f(y;\xi)\|\leq L\|x-y\|~;\quad \forall x,y\in\reals^d,\xi\in\supp\{\D\}~.
\end{align}
The last assumption also implies that the expected loss $f(\cdot)$ is $L$ smooth. A property of smooth functions that we will exploit throughout the paper is the following,
\begin{align} \label{eq:smoothness}
f(y)\leq f(x) + \nabla f(x)^\top(y-x)+ (L/2)\|y-x\|^2~; \quad \forall x,y\in\reals^d
\end{align}

In the rest of this manuscript, $\nabla f(x;\xi)$ relates to gradients with respect to $x$, i.e., $\nabla: = \nabla_x$. We use $\|\cdot\|$ to denote the Euclidean norm, and  $x^*$ denotes a global minima of $f(\cdot)$, i.e., $x^* = \min_{x \in \mathbb R^d} f(x)$.

\section{Method}
In this section we present \stormp (STochastic Recursive Momentum $+$):  a parameter-free stochastic optimization method that finds approximate stationary points at an optimal rate. 
We describe our method in Alg.~\ref{alg:stormp} and Eq.~\eqref{eq:stormP_Choices}, and state its guarantees in Theorem~\ref{thm:StormpMain}.

\paragraph{The original \storm algorithm:}
The original \storm template of \cite{cutkosky2019momentum} relies on an SGD-style update with a corrected momentum.
Concretely, the idea is to maintain a gradient estimate $d_t$ which is a \emph{corrected} weighted average of past stochastic gradients, and then update the iterates similarly to SGD, 
\begin{align}\label{eq:stormMain}
x_{t+1} = x_t-\eta_t d_t~.
\end{align}
Standard momentum is a weighted average of past gradients,
$$
d_{t} = a_t\nabla f(x_{t},\xi_{t}) + (1-a_t)d_{t-1}~;~~\text{where~} a_t\in[0,1]~.
$$
Under this construction, $d_t$ is generally a biased estimate of $\nabla f(x_t)$. In \storm it is suggested to add a correction term ,$(1-a_t)(\nabla f(x_{t},\xi_{t}) - \nabla f(x_{t-1},\xi_{t}))$, which leads to the following update rule (again, $a_t\in[0,1]$),
\begin{align}\label{eq:stormMomentum}
d_{t} = \nabla f(x_{t},\xi_{t}) + (1-a_t)(d_{t-1} - \nabla f(x_{t-1},\xi_{t}))~, \tag{Corrected Momentum}
\end{align}
The correction term plays a crucial role here: it exploits the smoothness of $f(\cdot,\xi)$ in a way that leads to a variance reduction effect. To see this effect one can inspect  the error of the momentum $d_t$ compared to the exact gradient at $x_t$,
$$
\eps_t: = d_t-\nabla f(x_t)~.
$$
The \storm update rule induces the following error dynamics,
$$
\eps_{t} = (1-a_t)\eps_{t-1} + a_t(\nabla f(x_t,\xi_t) - \nabla f(x_t)) + (1-a_t)Z_t
$$
where $Z_t: =(\nabla f(x_t,\xi_t) - \nabla f(x_{t-1},\xi_t))-(\nabla f(x_t) - \nabla f(x_{t-1}))$. 
Due to the smoothness of the objective we have $\|Z_t\|\leq O(\|x_t-x_{t-1}\|) = O(\eta_{t-1}\|d_{t-1}\|)$. Intuitively, as we approach a stationary point (and use a small enough learning rate) then $\eta_{t-1}\|d_{t-1}\|$ decreases which in turn reduces the magnitude of $Z_t$'s. Moreover, the second term in the above dynamics, $a_t(\nabla f(x_t,\xi_t) - \nabla f(x_t))$,
can be controlled by choosing a small enough momentum $a_t$.  Thus, carefully controlling the learning rate and momentum parameters leads to a variance reduction effect which facilitates fast convergence.

The original \storm paper \citep{cutkosky2019momentum} makes the following choices,
\begin{align}\label{eq:stormChoices}
\eta_t = \theta/{\left(w+\sum_{i=1}^t \|g_i\|^2\right)^{1/3}}~~~~~~~~~\text{\&~}~~~~~~~~a_t =cL^2\eta_{t-1}^2~, 
\end{align}
where we denote $g_t: =\nabla f(x_t,\xi_t)$. The above choice of learning rate is inspired by AdaGrad \cite{duchi2011adaptive}, which also sets the learning rate inversely proportional to the cumulative square norms of past gradients.
Note that $\theta$ and $w$ are constants that depend on the smoothness of the objective $L$, as well as on the bound on the gradients $G$, and $c$ is an absolute constant independent of the problem's characteristics. These choices of the constants and especially the choice of $a_t\propto L^2 \eta_{t-1}^2$ is crucial for the analysis of the original \storm. In fact, the convergence proof for \storm breaks down unless we encode this prior knowledge  into $\eta_t$ and $a_t$. Next, we describe our parameter-free version.

\paragraph{Our \stormp algorithm:}
\stormp relies on the original \storm template described in Equations~\eqref{eq:stormMain} and \eqref{eq:stormMomentum}, with the following parameter-free choices of learning rate and momentum parameter,
\begin{align}\label{eq:stormP_Choices}
\eta_t = 1/\left(\sum_{i=1}^t \|d_i\|^2/a_{i+1}\right)^{1/3}~~~~~~~~\text{\&~}~~~~~~~~
a_t = 1/\left(1+\sum_{i=1}^{t-1} \|g_i\|^2\right)^{2/3}~,
\end{align}
where again we denote $g_t: =\nabla f(x_t,\xi_t)$. Note that in contrast to the original \storm our adaptive  learning rate
builds on history of estimates $\{d_1,\ldots,d_t\}$ as well as on the momentum parameters $\{a_1,\ldots,a_{t+1}\}$. Our momentum term is similar to the adaptive choice of \storm, yet it does not require a bound on the gradients nor on the smoothness parameter, which was crucial for the original analysis.
Finally, note that the above choice ensures $a_t\in[0,1]$.

\begin{algorithm}[t]
\caption{\stormp} \label{alg:stormp}
\begin{algorithmic}[1]
\Input{ \#iterations $T$ , $x_1 \in \reals^d$}
\State Sample $\xi_1$ and set $d_1=g_1 = \nabla f(x_1,\xi_1)$
\For{$t = 1, ..., T$}
	\Indent
		\State $a_{t+1} \gets  1/\left(1+\sum_{i=1}^{t} \|g_i\|^2\right)^{2/3}~~~~\&~~~~\eta_t \gets 1/\left(\sum_{i=1}^t \|d_i\|^2/a_{i+1}\right)^{1/3}$
		\State $x_{t+1} \gets x_t - \eta_t d_t$
                 \State Sample $\xi_{t+1}$ and set $g_{t+1}: = \nabla f(x_{t+1};\xi_{t+1})$, and $\tg_{t}: = \nabla f(x_t;\xi_{t+1})$
		\State $d_{t+1} \gets g_{t+1} + (1-a_{t+1})(d_t - \tg_t)$
	\EndIndent
\EndFor 
\State Choose $\bar{x}_T$ uniformly at random from $\{x_1,\ldots,x_T\}$\\
\Return  $\bx_T$
\end{algorithmic}
\end{algorithm}

For completeness we present our method in Alg.~\ref{alg:stormp}, where it can be seen that \stormp is a combination of the original \storm template (Equations~\eqref{eq:stormMain} and \eqref{eq:stormMomentum}) together with the specific choices of $\eta_t$ and $a_t$ appearing in Eq.~\eqref{eq:stormP_Choices}.
Note that the solution that \stormp outputs is a point chosen uniformly at random among all iterates, which is quite standard in (stochastic) non-convex optimization.

\emph{Notation:} In Alg.~\ref{alg:stormp} and throughout the rest of the paper we will employ the following notation,
$$
g_t : = \nabla f(x_t,\xi_t)~;~~\tg_t: = \nabla f(x_t,\xi_{t+1})~;~~\bg_t: =  \nabla f(x_t)~.
$$
Now, we are at a position to present our main theorem regarding \stormp (Alg.~\ref{alg:stormp}):
\begin{Thm}\label{thm:StormpMain}
Under the assumption in Eq.~\eqref{eq:bounded-values},~\eqref{eq:bounded-gradient},~\eqref{eq:bounded-variance} and~\eqref{eq:exp-smooth-loss} in Section~\ref{sec:PrelimsAndDefs}, \stormp ensures,
$$
\E\|\nabla f(\bx_T)\| \leq O\left(\frac{M}{\sqrt{T}} + \frac{\kappa\sigma^{1/3}}{T^{1/3}} \right)~, 
$$
where $\kappa =O( B^{3/4}+L^{3/2})$;~ $M= O(1+L^{9/4}+B^{9/8}+G^{5} + (LG^4)^{3/2})$, and
the expectation is with respect to the randomization of the samples as well as the algorithm's.
\end{Thm}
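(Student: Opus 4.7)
The plan is to combine the original \storm descent analysis with AdaGrad-style lemmas for adaptive step sizes, and finally close the argument by a self-bounding step that resolves the coupling between the parameter-free choices of $\eta_t$ and $a_t$.

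First I would establish the standard descent inequality: applying Eq.~\eqref{eq:smoothness} to $x_{t+1}=x_t-\eta_t d_t$, writing $\langle \bg_t, d_t\rangle = \|\bg_t\|^2+\langle \bg_t,\eps_t\rangle$ with $\eps_t := d_t-\bg_t$, and using Young's inequality and telescoping together with Eq.~\eqref{eq:bounded-values} would yield
\[
\tfrac{1}{2}\sum_{t=1}^T \eta_t \|\bg_t\|^2 \le B + \tfrac{1}{2}\sum_t \eta_t \|\eps_t\|^2 + \tfrac{L}{2}\sum_t \eta_t^2\|d_t\|^2.
\]
This reduces the problem to controlling the two right-hand sums.

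Next I would analyze the estimator error. From the corrected momentum update one obtains the recursion
\[
\eps_{t+1} = (1-a_{t+1})\eps_t + a_{t+1}(g_{t+1}-\bg_{t+1}) + (1-a_{t+1})Z_{t+1},
\]
with $Z_{t+1}:=(g_{t+1}-\tg_t)-(\bg_{t+1}-\bg_t)$ satisfying $\E[Z_{t+1}\mid \Fcal_t]=0$ and $\|Z_{t+1}\|\le 2L\eta_t\|d_t\|$ by Eq.~\eqref{eq:exp-smooth-loss}. Since $a_{t+1}$ is $\Fcal_t$-measurable, taking conditional expectations and applying Young to the cross terms gives
\[
\E[\|\eps_{t+1}\|^2\mid \Fcal_t] \le (1-a_{t+1})\|\eps_t\|^2 + 2 a_{t+1}^2 \sigma^2 + 8L^2\eta_t^2 \|d_t\|^2.
\]
Dividing by $a_{t+1}$ and telescoping produces a bound on $\E\sum_t \|\eps_t\|^2/a_t$ in terms of $\sigma^2\sum_t a_t$ and $\sum_t L^2\eta_t^2\|d_t\|^2/a_{t+1}$. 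This is precisely where the \stormp choice $\eta_t\propto(\sum_i\|d_i\|^2/a_{i+1})^{-1/3}$ is tailored: the AdaGrad-type inequality $\sum_t x_t/(\sum_{i\le t}x_i)^{2/3}\le 3(\sum_t x_t)^{1/3}$ with $x_t=\|d_t\|^2/a_{t+1}$ yields $\sum_t \eta_t^2\|d_t\|^2/a_{t+1}\le 3/\eta_T$, while the same lemma applied to $a_t$ gives $\sum_t a_t = O((1+\sum_i \|g_i\|^2)^{1/3})$. A symmetric argument with $x_t=\|d_t\|^2$ also controls $\sum_t\eta_t \|\eps_t\|^2$ after bounding $\|\eps_t\|^2$ via the telescoped recursion.

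The last step converts the weighted sum into the target quantity by Cauchy--Schwarz, $\sum\|\bg_t\|\le (\sum\eta_t\|\bg_t\|^2)^{1/2}(\sum 1/\eta_t)^{1/2}$, and then $\E\|\nabla f(\bx_T)\| = (1/T)\E\sum_t\|\bg_t\|$. I expect the main obstacle to be the \emph{self-referential} nature of the resulting estimate: $1/\eta_T$ depends on $\sum\|d_t\|^2/a_{t+1}$, which via $\|d_t\|^2\lesssim \|\bg_t\|^2+\|\eps_t\|^2$ feeds back into the very sums already being controlled, and $a_{t+1}^{-1}\le (1+\sum_i\|g_i\|^2)^{2/3}$ is itself coupled to $\sum_t\|\bg_t\|^2+\sigma^2 T$ in expectation via Eq.~\eqref{eq:bounded-variance}. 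I would close this loop by introducing a single unknown $X:=\E\sum_t\|\bg_t\|^2$, expressing every adaptive numerator and denominator as an explicit function of $X$, $\sigma^2 T$, $L$, $B$, and $G$ (Jensen's inequality is used to pull expectations inside the concave map $x\mapsto x^{1/3}$; Eq.~\eqref{eq:bounded-gradient} is invoked only where necessary, and this is what generates the polynomial-in-$G$ terms in $M$), and then solving the resulting algebraic inequality of the schematic form $X \le c_0 + c_1 X^{1/3}+ c_2(\sigma^2 T)^{1/3}$. Substituting the resolved bound on $X$ back into the Cauchy--Schwarz estimate, dividing by $T$, and collecting powers of $B, L, G$ into the constants $M$ and $\kappa$ produces the advertised rate $O(M/\sqrt T + \kappa\sigma^{1/3}/T^{1/3})$.
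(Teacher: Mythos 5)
Your proposal reproduces the correct overall architecture (descent inequality, error recursion for $\eps_t$, self-normalized AdaGrad-type sums, and a final self-bounding resolution), but it skips over the one step that is genuinely new and hard in the fully adaptive setting, and that step is exactly where your telescoping would fail. When you divide the error recursion by $a_t$ and sum, the leftover term is $\sum_{t}\bigl(\tfrac{1}{a_{t+1}}-\tfrac{1}{a_t}\bigr)\|\eps_t\|^2$. In the simplified scheme $a_{t+1}=t^{-2/3}$ these increments are at most $2/3$ and the term is absorbed into $\tfrac23\sum_t\|\eps_t\|^2$; but for the adaptive choice one only has $\tfrac{1}{a_{t+1}}-\tfrac{1}{a_t}\le\tfrac23\sqrt{a_t}\,\|g_t\|^2$, which for early iterations (when $a_t\approx 1$) can be as large as $\tfrac23 G^2$. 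The paper handles this by introducing the stopping time $\tau^*=\max\{t: a_t\ge\beta\}$ with $\beta=\min\{1,1/G^4\}$, proving separately a bound on $\E\sum_{t\le\tau^*}\|\eps_t\|^2$ (this requires Doob's optional stopping theorem to kill the martingale cross-terms summed up to the random time $\tau^*$), and showing that $\tfrac{1}{a_{t+1}}-\tfrac{1}{a_t}\le 2/3$ only for $t\ge\tau^*+1$. Your plan, as written, telescopes as if the increments were uniformly small and therefore has no mechanism to control the pre-$\tau^*$ contribution; this is not a presentational detail but the source of the $G^{5}$ and $(LG^4)^{3/2}$ terms in $M$.

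A second, smaller error: you bound the noise contribution by $\sigma^2\sum_t a_t$ and claim $\sum_t a_t=O\bigl((1+\sum_i\|g_i\|^2)^{1/3}\bigr)$. That identity is false — with $a_t=(1+\sum_{i<t}\|g_i\|^2)^{-2/3}$ and all gradients near zero, $\sum_t a_t\approx T$ while the right-hand side is $O(1)$; the sum $\sum_t a_t$ is not of the self-normalized form $\sum_t b_t/(\sum_{i\le t}b_i)^p$ because the numerator $1$ does not match the increments $\|g_i\|^2$ of the denominator. The adaptivity to $\sigma$ comes precisely from keeping the noise inside the sum: since $a_t$ is $\H_{t-1}$-measurable, $\E[a_t\|g_t-\bg_t\|^2]\le\E[a_t\|g_t\|^2]$, and $\sum_t a_t\|g_t\|^2=\sum_t\|g_t\|^2/(1+\sum_{i<t}\|g_i\|^2)^{2/3}$ \emph{is} self-normalized and bounded by $G^2+6(1+\sum_t\|g_t\|^2)^{1/3}$ via a shifted version of the standard lemma. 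Finally, your closing "single unknown $X$" step needs to be the paper's two-case split on whether $\E\sum_t\|\eps_t\|^2$ exceeds $\tfrac12\E\sum_t\|\bg_t\|^2$: the descent inequality carries $\E\sum_t\|\eps_t\|^2$ with coefficient $1$ on its right-hand side, so it is not self-bounding in $X$ alone unless that case distinction (or an equivalent two-variable argument) is made explicit.
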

Theorem~\ref{thm:StormpMain} demonstrates that in the stochastic case \stormp achieves the optimal $O(1/T^{1/3})$ rate for our setting.
Moreover, it can be seen that \stormp implicitly adapts to the variance of the noise; in the noiseless case where $\sigma=0$, \stormp recovers the optimal $O(1/\sqrt{T})$ rate. We note that scaling the learning rate by some (absolute) constant factor may enable us to obtain better dependence on $L$ and $B$.

\section{Analysis}
In this section we  provide the convergence analysis of the \stormp algorithm.
We begin with the analysis in the offline case where $\sigma=0$, and establish a convergence rate of $O(1/\sqrt{T})$ in Section~\ref{sec:OfflineAnalysis} for completeness.
In Section~\ref{sec:StochasticAnalysisSimple}, we introduce a simplified version of \stormp, with a non-adaptive momentum parameter of the form $a_{t+1}: = 1/t^{2/3}$. Due to simplicity and space limitations, it is inconvenient to share the full proof of \stormp, and this simplified version enables us to illustrate the main steps of the original proof.
We show that this version achieves a convergence rate of $O(1/T^{1/3})$ in the stochastic case (though it does not adapt to the variance).
Finally, in Section~\ref{sec:OfflineAnalysis} we provide a proof sketch for \stormp in Alg.~\ref{alg:stormp} that establishes the result in Theorem~\ref{thm:StormpMain}. 
\subsection{Offline Case}~\label{sec:OfflineAnalysis}
Here we analyze \stormp in the case where $\sigma=0$, and demonstrate a rate of $O(1/\sqrt{T})$ for finding an approximate stationary point.
\begin{Thm}
	Let $f$ satisfy Eq.~\eqref{eq:bounded-values},~\eqref{eq:exp-smooth-loss} and $\bar x_T$ be generated after running Alg.~\ref{alg:stormp} for $T$ iterations under deterministic oracle. Then it holds that,
	\begin{align*}
		\E\|\nabla f(\bx_T)\| \leq O(\sqrt{1+L^3+B^{9/4}}/\sqrt{T})~.
	\end{align*}
	where we take expectation due to randomness governing the generation of $\bar x_T$ (see line 8 in Alg.~\ref{alg:stormp}).
\end{Thm}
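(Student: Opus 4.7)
The plan is to leverage the noise-free setting to collapse the \stormp dynamics to plain adaptive gradient descent, then run a self-bounding AdaGrad-style analysis. First, since $\sigma = 0$ implies $g_t = \tg_t = \nabla f(x_t)$ for every $t$, the error $\eps_t := d_t - \nabla f(x_t)$ obeys $\eps_{t+1} = (1-a_{t+1})\eps_t$ with $\eps_1 = 0$, so $d_t = \nabla f(x_t)$ throughout. Writing $a_t := \|\nabla f(x_t)\|^2$, $S_t := \sum_{i \leq t} a_i$, and $H_t := \sum_{i \leq t} a_i (1+S_i)^{2/3}$, the iterates reduce to $x_{t+1} = x_t - \eta_t \nabla f(x_t)$ with $\eta_t = 1/H_t^{1/3}$. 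Applying $L$-smoothness yields $f(x_{t+1}) - f(x_t) \leq -\eta_t a_t + (L/2)\eta_t^2 a_t$; telescoping and using~\eqref{eq:bounded-values} gives the descent inequality $\sum_t \eta_t a_t \leq B + (L/2)\sum_t \eta_t^2 a_t$.

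The heart of the argument is showing that $\sum_t \eta_t^2 a_t = \sum_t a_t/H_t^{2/3}$ is bounded by an \emph{absolute} constant, independent of $T$, $L$, $B$, and $S_T$. I would split the sum at the first index where $S_t$ crosses $1$. For the initial regime $\{t : S_t \leq 1\}$, the trivial bound $H_t \geq S_t$ (since $(1+S_i)^{2/3} \geq 1$) combined with the classical AdaGrad inequality $\sum_t (S_t - S_{t-1})/S_t^{2/3} \leq 3\,S_T^{1/3}$ gives a bound of at most $3$. For the tail $\{t : S_t > 1\}$, a Riemann-sum comparison yields $H_t \geq \int_0^{S_t}(1+x)^{2/3} dx = (3/5)[(1+S_t)^{5/3}-1] \geq (3/10)(1+S_t)^{5/3}$, so $a_t/H_t^{2/3} \lesssim a_t/(1+S_t)^{10/9}$; a second Riemann sum bounds $\sum_t a_t/(1+S_t)^{10/9} \leq \int_0^\infty dx/(1+x)^{10/9} = 9$. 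These two bounds yield $\sum_t \eta_t^2 a_t \leq C$ for a universal constant $C$, and hence $\sum_t \eta_t a_t \leq B + CL/2$.

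To close the argument, I would lower bound the LHS via monotonicity of $\eta_t$: $\sum_t \eta_t a_t \geq \eta_T S_T = S_T/H_T^{1/3}$. Since $(1+S_i)^{2/3} \leq (1+S_T)^{2/3}$ forces $H_T \leq (1+S_T)^{2/3} S_T \leq 2 S_T^{5/3}$ for $S_T \geq 1$, this gives $\sum_t \eta_t a_t \geq S_T^{4/9}/2$. Solving $S_T^{4/9}/2 \leq B + CL/2$ yields $S_T = O((B+L)^{9/4}) = O(B^{9/4} + L^{9/4}) \leq O(1 + L^3 + B^{9/4})$, where the last step uses $L^{9/4} \leq 1 + L^3$ valid for all $L \geq 0$. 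The theorem then follows by Cauchy--Schwarz: since $\bx_T$ is uniform over $\{x_1,\dots,x_T\}$, $\E\|\nabla f(\bx_T)\| = \frac{1}{T}\sum_t \|\nabla f(x_t)\| \leq \sqrt{S_T/T}$, matching the claimed rate. The main obstacle is the absolute-constant bound on $\sum_t \eta_t^2 a_t$: a naive AdaGrad estimate only gives $\sum_t \eta_t^2 a_t \leq 3 H_T^{1/3}$, which grows with $S_T$ and cannot close the self-bounding inequality; the two-regime split, using both the trivial and the Riemann-sum lower bounds on $H_t$, is what rescues the argument.
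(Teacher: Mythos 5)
Your proof is correct, but it takes a genuinely different route from the paper's. Both arguments begin identically (induction gives $d_t=\nabla f(x_t)$, then smoothness yields the descent inequality), but they diverge in how the telescoping and the second-order term are handled. The paper divides the descent inequality by $\eta_t$ before summing, so the left-hand side becomes $\sum_t\|\nabla f(x_t)\|^2$ directly, the function-value terms telescope into $B/\eta_T$, and the smoothness term is bounded by $L\bigl(\sum_t\|\nabla f(x_t)\|^2\bigr)^{2/3}$ via Lemma~\ref{lem:technical-1}; the result is the self-bounding polynomial inequality $x\leq B(1+x)^{2/9}x^{1/3}+Lx^{2/3}$ in $x=\sum_t\|\nabla f(x_t)\|^2$. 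You instead telescope the raw inequality, lower-bound $\sum_t\eta_t\|\nabla f(x_t)\|^2\geq \eta_T S_T\gtrsim S_T^{4/9}$ by monotonicity of $\eta_t$, and---this is your key new ingredient---show that $\sum_t\eta_t^2\|\nabla f(x_t)\|^2$ is an \emph{absolute} constant via a two-regime Riemann-sum comparison that exploits the full weights $(1+S_i)^{2/3}$ in the denominator of $\eta_t$, not merely the crude bound $\eta_t\leq S_t^{-1/3}$ that the paper uses. This buys you a simpler final inequality ($S_T^{4/9}\lesssim B+L$, hence $S_T\lesssim (B+L)^{9/4}$) and in fact a slightly better dependence $L^{9/4}$ in place of the paper's $L^3$, at the cost of an argument that is specific to the noiseless collapse $d_t=\nabla f(x_t)$ and does not extend as uniformly to the stochastic analysis, where the paper reuses the divide-by-$\eta_t$ telescoping (Lemma~\ref{lem:SunGradSTORMP}). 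All the individual steps check out: the integral comparisons $H_t\geq\int_0^{S_t}(1+x)^{2/3}\,dx$ and $\sum_t a_t(1+S_t)^{-10/9}\leq\int_0^\infty(1+x)^{-10/9}\,dx=9$ are valid since the integrands are monotone, and the concluding Cauchy--Schwarz step matches the paper's use of Jensen. (Only a cosmetic remark: your symbol $a_t$ for $\|\nabla f(x_t)\|^2$ collides with the algorithm's momentum parameter; rename it if this is to sit alongside the paper's notation.)
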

\begin{proof}
In the case where $\sigma=0$ one can  directly show by induction that $d_t=\bg_t: = \nabla f(x_t)$.
So the update rule becomes $x_{t+1} = x_t - \eta_t \bg_t$.
Now, using the smoothness of the objective implies,  
\begin{align*}
\Delta_{t+1}-\Delta_t = 
f(x_{t+1})-f(x_t) 
&\leq
 -\eta_t \|\bg_t\|^2 + {L\eta_t^2} \|\bg_t\|^2/2~,
\end{align*}
here we denoted $\Delta_t: = f(x_t)-f(x^*)$, where $x^*\in\argmin f(x)$.
Dividing by $\eta_t$, re-arranging and summing gives,
\begin{align}\label{eq:OfflineCase}
\sum_{t=1}^T\|\bg_t\|^2
& \leq 
\frac{\Delta_1}{\eta_1} -\frac{\Delta_{T+1}}{\eta_T} +
\sum_{t=2}^T\left(\frac{1}{\eta_t}-\frac{1}{\eta_{t-1}}\right)\Delta_t
+
\frac{L}{2}\sum_{t=1}^T\eta_t \|\bg_t\|^2 \nonumber\\
&\leq
\frac{B}{\eta_1} + B\sum_{t=2}^T\left(\frac{1}{\eta_t}-\frac{1}{\eta_{t-1}}\right)
+
\frac{L}{2}\sum_{t=1}^T\frac{\|\bg_t\|^2}{\left(\sum_{i=1}^t\|\bg_i\|^2\right)^{1/3}}  \nonumber\\
&\leq 
\frac{B}{\eta_T} +L\left(\sum_{i=1}^t\|\bg_i\|^2\right)^{2/3} 
\leq
B\left(\sum_{t=1}^T \|\bg_t\|^2/a_{t+1}\right)^{1/3} +L\left(\sum_{t=1}^T\|\bg_t\|^2\right)^{2/3}  \nonumber\\
&\leq
B\left(1+\sum_{t=1}^T \|\bg_t\|^2\right)^{2/9}\left(\sum_{t=1}^T \|\bg_t\|^2\right)^{1/3} +L\left(\sum_{t=1}^T\|\bg_t\|^2\right)^{2/3} 
\end{align}
where the second inequality uses  $\eta_t =\left(\sum_{i=1}^t\|\bg_i\|^2/a_{i+1}\right)^{-1/3}  \leq \left(\sum_{i=1}^t\|\bg_i\|^2\right)^{-1/3}$ which holds since $d_t=\bg_t$ and $a_t\leq 1$. 
We also use that $\Delta_t \in [0,B]$ together with $\eta_{t}^{-1} - \eta_{t-1}^{-1}\geq 0$. 
The third inequality uses Lemma~\ref{lem:technical-1} below; and the last inequality uses $1/a_{t+1} \leq (1/a_{T+1})=  \left(1+\sum_{t=1}^T \|\bg_t\|^2\right)^{2/3}$, which holds since $a_t$ is monotonically non-increasing.

By treating the inequality in Eq.~\eqref{eq:OfflineCase} as a polynomial of $x = \sum_{t=1}^T\|\bg_t\|^2$, one can derive the following bound,
$
\sum_{t=1}^T\|\bg_t\|^2 \leq O(1+L^3+B^{9/4})~.
$
Using the definition of $\bx_T$ as well as Jensen's inequality implies,
$$
\E\|\nabla f(\bx_T)\|:=\E\|\bg(\bx_T)\| \leq \sqrt{\E\|\bg(\bx_T)\|^2 } =  \sqrt{\sum_{t=1}^T\|\bg_t\|^2/T } \leq O(\sqrt{1+L^3+B^{9/4}}/\sqrt{T})~.
$$
which establishes the bound. In the proof we have used the technical lemma below,
\begin{Lem} \label{lem:technical-1}
	Let $b_1 > 0$, $b_2, ..., b_n\geq 0 $ be a sequence of real numbers, $p \in (0,1)$ be a real number.
	\begin{align*}
		 \sum_{i=1}^{n} \frac{b_i}{ \left( \sum_{j=1}^{i} b_j \right)^{p} } \leq \frac{1}{1-p} \left( \sum_{i=1}^{n} b_i \right)^{1-p}
	\end{align*}
\end{Lem}
\end{proof}
\subsection{Stochastic Case Analysis of Simplified \stormp} \label{sec:StochasticAnalysisSimple}
Here we analyze a simplified version of \stormp in the stochatic setting. 
While this version does not adapt to the noise variance, it exhibits the optimal rate of
$O(1/T^{1/3})$ in the stochastic case, and its analysis illustrates some of the main ideas that we employ in the proof of the fully adaptive \stormp (which is more involved).

The  version that we analyze here differs from \stormp in the choice of the momentum parameters. 
Here we choose $a_1=1$ and $a_{t+1}=1/t^{2/3}~;\forall t\geq 1$, in contrast to the adaptive choice that we make in Alg.~\ref{alg:stormp}. 
Note that we keep the same expression for the step size, $\eta_t =  1/\left(\sum_{i=1}^t \|d_i\|^2/a_{i+1}\right)^{1/3}$.
\begin{Thm}
	Under Assumptions in Eq.~\eqref{eq:bounded-values},~\eqref{eq:bounded-gradient},~\eqref{eq:bounded-variance} and~\eqref{eq:exp-smooth-loss}, \emph{simplified} \stormp ensures,
	\begin{align*}
		\E\|\nabla f(\bx_T)\| = O( \sqrt{L^3 +\sigma^2+ B^{3/2}}/T^{1/3})~,
	\end{align*}
\end{Thm}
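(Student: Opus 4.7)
The plan is to adapt the two-pronged STORM-style analysis. The choice $a_{t+1} = 1/t^{2/3}$ is engineered so that $\sum_t a_{t+1}^2 = O(1)$ (the injected noise is summable) while $\sum_t a_{t+1} = O(T^{1/3})$ (which dictates the final rate).

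First, I would derive a descent inequality exactly as in the offline proof: starting from $L$-smoothness and the update $x_{t+1} = x_t - \eta_t d_t$, writing $\bg_t = d_t - \eps_t$ with $\eps_t := d_t - \bg_t$, applying Young's inequality to the cross term, dividing by $\eta_t$, and Abel-summing using $\Delta_t \in [0,B]$ gives
\begin{align*}
\sum_{t=1}^T \|\bg_t\|^2 ~\leq~ \frac{2B}{\eta_T} + \sum_{t=1}^T \|\eps_t\|^2 + L \sum_{t=1}^T \eta_t \|d_t\|^2.
\end{align*}
Second, I would derive the error recursion. Expanding the corrected-momentum update yields $\eps_{t+1} = (1-a_{t+1})\eps_t + a_{t+1}(g_{t+1}-\bg_{t+1}) + (1-a_{t+1})Z_{t+1}$ with $Z_{t+1} = (g_{t+1}-\tg_t) - (\bg_{t+1}-\bg_t)$. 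Since $a_{t+1}$ is deterministic here, both stochastic pieces are mean-zero given $\Fcal_t$; combined with the smoothness bound $\E[\|Z_{t+1}\|^2\mid\Fcal_t]\leq L^2\eta_t^2\|d_t\|^2$ this gives
\begin{align*}
\E\|\eps_{t+1}\|^2 ~\leq~ (1-a_{t+1})\E\|\eps_t\|^2 + 2a_{t+1}^2\sigma^2 + 2L^2\E[\eta_t^2\|d_t\|^2].
\end{align*}

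Third, I would exploit the adaptive step size. Setting $H_t := \sum_{i\leq t}\|d_i\|^2/a_{i+1}$ so that $\eta_t = H_t^{-1/3}$, Lemma~\ref{lem:technical-1} gives the telescoping $\sum_t \eta_t^2\|d_t\|^2/a_{t+1} \leq 3H_T^{1/3}$ and, since $1/a_{t+1}\geq 1$, $\sum_t \eta_t\|d_t\|^2 \leq (3/2)(\sum_t\|d_t\|^2)^{2/3}$. Now I would unroll the error recursion and swap the order of summation, using the persistence estimate $\sum_{t\geq s}\prod_{r=s+1}^{t-1}(1-a_{r+1}) = O(1/a_{s+1})$ (which holds precisely because $a_{t+1}=1/t^{2/3}$ has polynomial decay). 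This converts the directly-telescoping weighted bound into
\begin{align*}
\sum_t \E\|\eps_t\|^2 ~\leq~ O\left(1 + \sigma^2\sum_t a_{t+1} + L^2 \E[H_T^{1/3}]\right) = O\left(1 + \sigma^2 T^{1/3} + L^2 (\E H_T)^{1/3}\right).
\end{align*}

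Finally, substituting these three ingredients back into the descent inequality, and using $\|d_t\|^2\leq 2\|\bg_t\|^2 + 2\|\eps_t\|^2$ together with $\E H_T \leq 2T^{2/3}\sum_t \E\|d_t\|^2$, one obtains a self-bounding polynomial inequality in $G_T := \sum_t \E\|\bg_t\|^2$. Reading it as a polynomial in $G_T^{1/3}$ (exactly as in the offline case) yields $G_T \leq O((L^3 + \sigma^2 + B^{3/2})T^{1/3})$, and Jensen on the uniform-random $\bx_T$ delivers $\E\|\nabla f(\bx_T)\| \leq \sqrt{G_T/T} = O(\sqrt{L^3+\sigma^2+B^{3/2}}/T^{1/3})$. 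I expect the delicate point to be the third step: the recursion of the second step only directly controls $\sum_t a_{t+1}\E\|\eps_t\|^2$, whereas the descent needs $\sum_t \E\|\eps_t\|^2$, and the bound $1/a_{s+1}$ in the persistence estimate is precisely what compensates the $a_{s+1}^2$ noise factor to yield the $\sigma^2 T^{1/3}$ contribution. A secondary technical nuisance is unwinding the final self-bounding inequality with the correct dependence on $L$, $B$, and $\sigma$.
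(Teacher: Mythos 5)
Your proposal is correct and follows the same overall architecture as the paper's proof: the descent inequality from smoothness with $d_t=\bg_t+\eps_t$, the error recursion driven by the corrected momentum, the use of Lemma~\ref{lem:technical-1} on the adaptive step size, a self-bounding polynomial inequality, and Jensen on the randomly chosen iterate. You deviate in two execution-level steps, both of which work. First, to pass from the recursion $\E\|\eps_{t}\|^2\leq(1-a_t)\E\|\eps_{t-1}\|^2+(\text{noise})_t$ to a bound on $\sum_t\E\|\eps_t\|^2$, you unroll and invoke the persistence estimate $\sum_{t\geq s}\prod_r(1-a_r)=O(1/a_{s+1})$, whereas the paper divides the recursion by $a_t$, Abel-sums, and absorbs the resulting $\sum_t(1/a_{t+1}-1/a_t)\E\|\eps_t\|^2$ term using $1/a_{t+1}-1/a_t\leq 2/3$. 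Both give the same $O(L^2T^{2/9}(\E\sum_t\|d_t\|^2)^{1/3}+\sigma^2T^{1/3})$ bound; the paper's variant is the one that survives when $a_t$ becomes a data-dependent random variable in the fully adaptive analysis (where unrolling would entangle the random coefficients $\prod_r(1-a_r)$ with the noise terms), which is presumably why it is used here too. Second, you fold everything into a single polynomial inequality in $G_T=\E\sum_t\|\bg_t\|^2$ by first resolving $D_T=\E\sum_t\|d_t\|^2$ in terms of $G_T$, whereas the paper splits into the cases $\E\sum_t\|\eps_t\|^2\gtrless\frac12\E\sum_t\|\bg_t\|^2$; these are interchangeable ways of breaking the circularity between $E_T$, $D_T$, and $G_T$, and both land on $G_T\leq O((L^3+\sigma^2+B^{3/2})T^{1/3})$. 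The one point to be careful about in your write-up is that the "single inequality" route requires solving two nested self-bounding inequalities (first for $D_T$ or $E_T$, then for $G_T$), which is exactly the bookkeeping the case split is designed to avoid.
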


\begin{proof}
The proof is composed of two parts. In the first we bound the cumulative expectation of errors $\E\sum_{t=1}^T\|\eps_t\|^2$, where $\eps_t$ is the difference between the corrected momentum $d_t$ and the exact gradient $\bg_t$, i.e.~$\eps_t=d_t-\bg_t$. Thus, in the first part we relate the above sum to the sum of exact gradients $\E\sum_{t=1}^T\|\bg_t\|^2$.
Then, in the second part we divide into two sub-cases the first where $\E\sum_{t=1}^T\|\eps_t\|^2\leq (1/2)\E\sum_{t=1}^T\|\bg_t\|^2$ and its complement. 
In one of these sub-cases we also use the smoothness of the objective together with the update rule, similarly to what we do in Eq.~\eqref{eq:OfflineCase}.

\paragraph{First Part: Bounding $\E\sum_{t=1}^T\|\eps_t\|^2$.} The  update rule for $d_t$ induces the following error dynamics,
\begin{align}\label{eq:ErrorDynamics}
\eps_{t} = (1-a_t)\eps_{t-1} + a_t(g_t - \bg_t) + (1-a_t)Z_t
\end{align}
where $Z_t: = (g_t-\tg_{t-1}) - (\bg_t-\bg_{t-1})$.  Letting $\H_t$ be the history to time $t$, i.e.,
$\H_t: = \{x_1,\xi_1,\xi_2,\xi_3\ldots, \xi_t\}$ and recalling that both $a_t$ and $x_t$ depend on history up to $t-1$, i.e., $\mathcal H_{t-1}$,
immediately implies that  $\E[a_t(g_t - \bg_t)\vert \H_{t-1}] =\E[(1-a_t)Z_t\vert \H_{t-1}] =0$, as well as
$\E[(1-a_t)\eps_{t-1}\vert \H_{t-1}] = (1-a_t)\eps_{t-1}$.
 
Thus, 
taking the square of the above equation and then taking the expectation gives,
\begin{align}\label{eq:EpsRecurse1}
	\E \norm{\eps_t}^2 
	&\leq (1-a_t)^2 \E \norm{\eps_{t-1}}^2 +\|(1-a_t)Z_t + a_t(g_t-\bg_t) \|^2 \nonumber\\
	&\leq
	(1-a_t)^2 \E \norm{\eps_{t-1}}^2+2(1-a_t)^2\norm{Z_t}^2 + 2a_t^2\E\|g_t-\bg_t\|^2 \nonumber\\
	&\leq
	(1-a_t) \E \norm{\eps_{t-1}}^2+ 8L^2\E \eta_{t-1}^2 \norm{d_{t-1}}^2 + 2a_t^2\sigma^2~,
\end{align}
where the second line uses $\|b+c\|^2\leq 2\|b\|^2+2\|c\|^2$, and the last line uses $\E\|g_t-\bg_t\|^2\leq \sigma^2$ and $(1-a_t)\in[0,1]$, as well as
the smoothness assumption that implies $\|Z_t\| \leq \|g_t-\tg_{t-1}\| + \|\bg_t-\bg_{t-1}\|\leq 2L\|x_t-x_{t-1}\| = 2L\eta_{t-1}\|d_{t-1}\|$.

Dividing Eq.~\eqref{eq:EpsRecurse1} by $a_t$ and re-arranging implies,
\begin{align*}
	\E \norm{\eps_{t-1}}^2 
	&\leq 
	\frac{1}{a_t} (\E \norm{\eps_{t-1}}^2 - \E \norm{\eps_{t}}^2)
	 +8L^2\E [\eta_{t-1}^2 \norm{d_{t-1}}^2/a_t] + 2a_t\sigma^2~.
\end{align*}
Summing the above, and using $\eps_0:=0$ gives,
\begin{align}\label{eq:EpsRecurse2}
	\E \sum_{t=1}^T \norm{\eps_{t-1}}^2 
	&\leq 
	\underbrace{-\frac{\E \norm{\eps_{T}}^2}{a_T}}_{\rA}
	+
	\underbrace{\sum_{t=1}^{T-1} (\frac{1}{a_{t+1}}-\frac{1}{a_{t}}) \E \norm{\eps_{t}}^2}_{\rB}
	 +
	 8L^2\underbrace{\E [\sum_{t=1}^T \eta_{t-1}^2 \norm{d_{t-1}}^2/a_t]}_{\rC}
	 + 
	 2\sigma^2\underbrace{\sum_{t=1}^T a_t }_{\rD}
\end{align}
Next we bound all the term on the RHS of the above equation:\\

\textbf{Bounding $\rA$:} Since $a_T\leq 1$ we can bound $-{\E \norm{\eps_{T}}^2}/{a_T}\leq -\E \norm{\eps_{T}}^2$ \\

\textbf{Bounding $\rB$:} Note that $G(z) = z^{2/3}$ is a concave  function in $\reals_{+}$. Thus applying the gradient inequality  implies that $\forall z_1,z_2\geq 0$  we have $(z_1+z_2)^{2/3} - z_1^{2/3} \leq \frac{2}{3} z_1^{-1/3}z_2$. Hence, for all $ t\geq 2$,
$$
	{1}/{a_{t+1}}-{1}/{a_{t}} = t^{2/3} - (t-1)^{2/3} \leq {2(t-1)^{-1/3}}/{3} \leq {2}/{3}~.
$$
Moreover, $1/a_{2} - 1/a_1 = 0$.
These imply that $\rB \leq (2/3)\E \sum_{t=1}^T \norm{\eps_{t}}^2$.

\textbf{Bounding $\rC$:} By definition of $\eta_{t}$ we have,
\begin{align*}
\rC = \E\sum_{t=1}^T\frac{\norm{d_{t-1}}^2/a_t}{\left(\sum_{i=1}^{t-1} \|d_i\|^2/a_{i+1}\right)^{2/3}} \leq 3\E\left(\sum_{t=1}^{T-1} \|d_t\|^2/a_{t+1}\right)^{1/3}\leq 3T^{2/9} \left(\E\sum_{t=1}^{T} \|d_t\|^2\right)^{1/3}~.
\end{align*}
where the first inequality uses Lemma~\ref{lem:technical-1}, and the second inequality uses $1/a_t\leq 1/a_{T
+1} \leq T^{2/3}$ as well as Jensen's inequality with respect to the concave function $U(z) = z^{1/3}$, defined over $\reals_{+}$.

\textbf{Bounding $\rD$:} Lemma~\ref{lem:technical-1} immediately implies that $\rD =1+\sum_{t=1}^{T-1} 1/t^{2/3}\leq 1+3T^{1/3}\leq 4T^{1/3}$.

Plugging these bounds into Eq.~\eqref{eq:EpsRecurse2} and re-arranging yields,
\begin{align}\label{eq:EpsFinalPartA}
	\E \sum_{t=1}^T \norm{\eps_{t}}^2 
	&\leq 
	72L^2T^{2/9} \left(\E\sum_{t=1}^{T} \|d_t\|^2\right)^{1/3}
	+
	24\sigma^2 T^{1/3}~.
\end{align}

\paragraph{Second Part: Bounding $\E\sum_{t=1}^T\|\bg_t\|^2$.}
Here we use the bound of Eq.~\eqref{eq:EpsFinalPartA} in order to bound the sum of square gradients.
Let us divide into two sub-cases.  \\

\textbf{Case 1 : Assume that $\E\sum_{t=1}^T \norm{\eps_t}^2 \geq \frac{1}{2}\E\sum_{t=1}^T\|\bg_t\|^2 $.} Combining the condition of Case 1 with $\|d_t\|^2 \leq 2\|\bg_t\|^2 + 2\|\eps_t\|^2$ (due to $d_t = \bg_t+\eps_t$), implies that $\E\sum_{t=1}^T\|d_t\|^2 \leq 6\E\sum_{t=1}^T\norm{\eps_t}^2$. Plugging this inside Eq.~\eqref{eq:EpsFinalPartA} yields,
\begin{align*}
	\E \sum_{t=1}^T \norm{\eps_{t}}^2 
	&\leq 
	72L^2T^{2/9} \left(6\E\sum_{t=1}^{T} \norm{\eps_{t}}^2 \right)^{1/3}
	+
	24\sigma^2 T^{1/3}~.
\end{align*}
The above immediately implies that $\E \sum_{t=1}^T \norm{\eps_{t}}^2 \leq O((L^3 + \sigma^2)T^{1/3})$, and due to the condition of Case 1 we therefore have,
$\E \sum_{t=1}^T \norm{\bg_{t}}^2 \leq O((L^3 + \sigma^2)T^{1/3})$. This concludes the first case.

\textbf{Case 2 : Assume that $\E\sum_{t=1}^T \norm{\eps_t}^2 \leq \frac{1}{2}\E\sum_{t=1}^T\|\bg_t\|^2 $.}
Combining the condition of Case 2 with $\|d_t\|^2 \leq 2\|\bg_t\|^2 + 2\|\eps_t\|^2$ (due to $d_t = \bg_t+\eps_t$), implies that $\E\sum_{t=1}^T\|d_t\|^2 \leq 3\E\sum_{t=1}^T\norm{\bg_t}^2$. 

Now using the update rule $x_{t+1}=x_t-\eta_t d_t$ together with smoothness of $f(\cdot)$, one can show in a similar manner to our derivation of Eq.~\eqref{eq:OfflineCase} the following bound (we defer this to the appendix),
\begin{align}\label{eq:SumOfGradsStochastic}
	\sum_{t=1}^{T} \|\bg_t\|^2 
	 &\leq
	 \sum_{t=1}^{T} \|\eps_t\|^2 + 2BT^{2/9}\left(\sum_{t=1}^{T} \|d_t\|^2\right)^{1/3} + \frac{3}{2}L\left(\sum_{t=1}^T \|d_t\|^2\right)^{2/3} 
\end{align}
Taking the expectation of the above equation and plugging in  $\E\sum_{t=1}^T\|d_t\|^2 \leq 3\E\sum_{t=1}^T\norm{\bg_t}^2$ as well as $\E\sum_{t=1}^T \norm{\eps_t}^2 \leq \frac{1}{2}\E\sum_{t=1}^T\|\bg_t\|^2 $ gives,
\begin{align}\label{eq:SumOfGradsStochasticFinal}
	\E\sum_{t=1}^{T} \|\bg_t\|^2 
	 &\leq
	 \frac{1}{2}\E\sum_{t=1}^{T} \|\bg_t\|^2 + 2BT^{2/9}\left(3\E\sum_{t=1}^{T} \|\bg_t\|^2\right)^{1/3} + \frac{3}{2}L\left(3\E\sum_{t=1}^T \|\bg_t\|^2\right)^{2/3} 
\end{align}
where we also used Jensen's inequality with respect to he concave functions $z^{1/3}$ and $z^{2/3}$ defined over $\reals_{+}$.
The above immediately implies, $\E \sum_{t=1}^T \norm{\bg_{t}}^2 \leq O(L^3 + B^{3/2}T^{1/3})$. This concludes the second case.

\paragraph{Summary.} We have shown that $\E \sum_{t=1}^T \norm{\bg_{t}}^2 \leq O((L^3 +\sigma^2+ B^{3/2})T^{1/3})$, combining this with the definition of $\bx_T$ and using Jensen's inequality similarly to what we did in the offline analysis provides,
$$
\E\|\nabla f(\bx_T)\| = O( \sqrt{L^3 +\sigma^2+ B^{3/2}}/T^{1/3})~,
$$
which concludes the proof.
\end{proof}
\subsection{Stochastic Case Analysis of \stormp} \label{sec:StochasticAnalysis}
Finally, we provide a sketch of the proof for the \stormp algorithm appearing in Alg.~\ref{alg:stormp}.
In a high level, the analysis follows similar lines to that of of simplified \stormp's appearing in Section~\ref{sec:StochasticAnalysisSimple}. 

There are two extra challenges compared to the analysis of simplified \stormp:
\begin{enumerate}
	\item Now $a_t$ is a random variable that depends on the noisy samples.  
	\item The differences $1/a_{t+1} -1/a_t$ are not necessarily smaller than $1$. 
\end{enumerate}
Recall that in the analysis appearing in Section~\ref{sec:StochasticAnalysisSimple} we used $1/a_{t+1} -1/a_t\leq 2/3$,
which was crucial to bounding term  $\rB$.

Among the tools that we use to address the first challenge is a version of Young's inequality, that we mention in the appendix. 
To cope with the second challenge, when we bound the expectation of $\sum_{t=1}^T\|\eps_t\|^2$, it is split into two,
$$
\sum_{t=1}^T\|\eps_t\|^2 = \sum_{t=1}^{\tau^*}\|\eps_t\|^2  +  \sum_{t=\tau^*+1}^{T}\|\eps_t\|^2
$$
where $\tau^*$ is a time-step after which we can ensure that $1/a_{t+1} -1/a_t\leq 2/3$. Next we proceed with the proof sketch.

\begin{proof}[Proof Sketch of Theorem~\ref{thm:StormpMain}]
The proof is composed of three parts: \textbf{(a)} In the first part we bound the cumulative expectation of errors $\E\sum_{t=1}^{\tau^*}\|\eps_t\|^2$, where $\eps_t: = d_t-\bg_t$, and $\tau^*$ is a stopping time after which we can ensure that $1/a_{t+1} -1/a_t\leq 2/3$. \textbf{(b)} In the second part we use our bound on $\E\sum_{t=1}^{\tau^*}\|\eps_t\|^2$ in order to bound the total sum of square errors, $\E\sum_{t=1}^{T}\|\eps_t\|^2$.
\textbf{(c)}  Then, in the last part we divide into two sub-cases the first where $\E\sum_{t=1}^T\|\eps_t\|^2\leq (1/2)\E\sum_{t=1}^T\|\bg_t\|^2$ and its complement. 
In one of these sub-cases we also use the smoothness of the objective together with the update rule, similarly to what we do in Eq.~\eqref{eq:OfflineCase}.

\paragraph{First Part: Bounding $\E\sum_{t=1}^{\tau^*}\|\eps_t\|^2$.}
Recall the error dynamics of \stormp appearing in Eq.~\eqref{eq:ErrorDynamics}. Taking the square and summing up to some $\tau^*\in[T]$ enables to bound,
\begin{align*}
\sum_{t=1}^{\tau^*}\|\eps_t\|^2
 \leq \sum_{t=1}^{\tau^*}(1-a_t)\|\eps_{t-1}\|^2 +2\sum_{t=1}^{\tau^*}\|Z_t\|^2 + 2\sum_{t=1}^{\tau^*}a_t^2 \|g_t-\bg_t\|^2
+ \sum_{t=1}^{\tau^*}M_t~,
\end{align*}
where $M_t = 2 \langle (1 - a_t) \epsilon_{t-1}, a_t(g_t - \bg_t) + (1 - a_t)Z_t \rangle$ is a martingale difference sequence such that $\E[M_t\vert \H_{t-1}]=0$, where $\H_t$ is the history to time $t$, i.e., $\H_t: = \{x_1,\xi_1,\xi_2,\xi_3\ldots, \xi_t\}$. Also, recall that $Z_t: = (g_t-\tg_{t-1}) - (\bg_t-\bg_{t-1})$.

Now let us  define $\beta := \min\{1,1/G^4\}$, and  $\tau^* = \max\{t\in[T]: a_t\geq \beta\}$. Recalling that $a_{t+1}$ is measurable with respect to $\H_t$ implies that $\tau^*\in[T]$ is a stopping time.

Re-arranging the above and using the definition of $\tau^*$ implies,
\begin{align*}
\beta\sum_{t=1}^{\tau^*}\|\eps_t\|^2 \leq \|\eps_{\tau^*}\|^2 + \sum_{t=1}^{\tau^*-1}a_{t+1}\|\eps_t\|^2 \leq 2\underbrace{\sum_{t=1}^{T}\|Z_t\|^2}_{\rOne} + 2\underbrace{\sum_{t=1}^{T}a_t^2 \|g_t-\bg_t\|^2}_{\rTwo} + 
\underbrace{\sum_{t=1}^{\tau^*}M_t}_{\rThree}
\end{align*}
where we used $\tau^*\leq T$, as well as $\beta\leq 1$.
Next we bound the expected value  of the above terms.\\

\textbf{Bounding $\rOne$.} As in the previous section, the smoothness property implies that $\|Z_t\|^2\leq 4L^2\eta_{t-1}^2\|d_{t-1}\|^2$. Using the expression for $\eta_{t-1} $ together with Lemma~\ref{lem:technical-1} enables to show,
\begin{align*}
\rOne
\leq 
   4L^2\sum_{t=1}^{T}\frac{\|d_{t-1}\|^2}{(\sum_{i=1}^{t-1} \|d_i\|^2)^{2/3}} 
\leq 
   12L^2 (\sum_{t=1}^{T} \|d_t\|^2)^{1/3} ~.
\end{align*}
\textbf{Bounding $\rTwo$.} One can directly show that $\E[a_t^2\|g_t-\bg_t\|^2]\leq \E[a_t^2\|g_t\|^2]$. Using this together with the expression for $a_t$, it is possible to show that,
\begin{align*}
\E\rTwo
\leq 
  \E\sum_{t=1}^{T}\frac{\|g_t\|^2}{(1+\sum_{i=1}^{t-1} \|g_i\|^2)^{4/3}} 
\leq 
   C_1 ~.
\end{align*}
where $C_1$ is a constant, and the second inequality is due to a  lemma that we describe in the appendix.\\

\textbf{Bounding $\rThree$.} Since $\tau^*\in[T]$ is a bounded stopping time, and $M_t$ is a martingale difference sequence, then Doob's optional stopping theorem~\cite{levin2017markov} implies $\E\rThree = \E\sum_{t=1}^{\tau^*}M_t =0$. 

\textbf{Conclusion.} The above  together with Jensen's inequality for $U(z)=z^{1/3}$ defined over $\reals_{+}$, yields,
\begin{align}\label{eq:EpsBoundFirstPart}
\E\sum_{t=1}^{\tau^*}\|\eps_t\|^2 \leq 2C_1/\beta + 24(L^2/\beta) (\E\sum_{t=1}^{T} \|d_t\|^2)^{1/3}~.
\end{align}
\paragraph{Second Part: Bounding $\E\sum_{t=1}^{T}\|\eps_t\|^2$.}
Recall the error dynamics of \stormp appearing in Eq.~\eqref{eq:ErrorDynamics}. Dividing by $\sqrt{a_t}$, taking the square and summing up to some $T$ enables to bound,
\begin{align*}
\frac{1}{a_t}\|\eps_t\|^2 
&\leq
(\frac{1}{a_t}-1)\|\eps_{t-1}\|^2 + 2\frac{\|Z_t\|^2}{a_t} + 2a_t\|g_t-\bg_t\|^2 + Y_t
\end{align*}
where $Y_t = 2 \langle \frac{1 - a_t}{\sqrt a_t} \epsilon_{t-1}, \sqrt a_t (g_t - \bg_t) + \frac{1 - a_t}{\sqrt a_t} Z_t \rangle$ is a martingale difference sequence such $\E[Y_t\vert \H_{t-1}]=0$. 
Re-arranging the above and summing one can show,
$$
\sum_{t=1}^{T}\|\eps_{t-1}\|^2 \leq 
\underbrace{-\frac{1}{a_T}\|\eps_T\|^2}_{\rA} 
+
\underbrace{\sum_{t=1}^{T}(\frac{1}{a_{t+1}}-\frac{1}{a_t})\|\eps_t\|^2}_{\rB}  
+ 
2\underbrace{\sum_{t=1}^{T}\frac{\|Z_t\|^2}{a_t}}_{\rC}
 +
 2\underbrace{\sum_{t=1}^{T}a_t\|g_t-\bg_t\|^2}_{\rD}
  +
  \underbrace{ \sum_{t=1}^{T}Y_t}_{\rE}
$$
Now, due to the martingale property $\E\rE = 0$. Next, we focus on bounding term $\rB$,\\
\textbf{Bounding $\rB$.} 
Using the definition of $\tau^*$ one can show that $1/a_{t+1} \leq 1/\tbeta~;\forall t\leq \tau^*$, where $1/\tbeta: = (1/\beta^{3/2}+ G^2)^{2/3}$. Moreover, we can show,
$$
1/a_{t+1} - 1/a_t \leq 2/3~; \quad \forall t\geq \tau^*+1
$$
This enables to decompose and bound $\rB$ according to $\tau^*$,
\begin{align}
\sum_{t=1}^T(\frac{1}{a_{t+1}}-\frac{1}{a_t})\|\eps_t\|^2  &=
\sum_{t=1}^{\tau^*}(\frac{1}{a_{t+1}}-\frac{1}{a_t})\|\eps_t\|^2 +\sum_{t=\tau^*+1}^T(\frac{1}{a_{t+1}}-\frac{1}{a_t})\|\eps_t\|^2 \nonumber\\
&\leq
\frac{1}{\tbeta}\sum_{t=1}^{\tau^*}\|\eps_t\|^2 +\frac{2}{3}\sum_{t=\tau^*+1}^T\|\eps_t\|^2 
\leq
\frac{1}{\tbeta}\sum_{t=1}^{\tau^*}\|\eps_t\|^2 +\frac{2}{3}\sum_{t=1}^T\|\eps_t\|^2~.
\end{align}
This enables to use Eq.~\eqref{eq:EpsBoundFirstPart} to bound the expected value of term $\rB$.

From here the analysis of the other terms and bounding $\E\sum_{t=1}^{T}\|\eps_{t-1}\|^2$ is done similarly to our analysis of simplified \stormp.
\paragraph{Third Part: Bounding $\E\sum_{t=1}^{T}\|\bg_t\|^2$.} In this part we divide into two sub-cases depending whether
$\E\sum_{t=1}^T \norm{\eps_t}^2 \geq (1/2)\E\sum_{t=1}^T\|\bg_t\|^2 $ or not. And continue similarly to our analysis of simplified \stormp. The rest of the details appear in the appendix.
\end{proof}

\section{Conclusion}
We have presented a novel parameter-free and adaptive algorithm for non-convex optimization that obtains the optimal rate in the setting of expectation over smooth losses while adapting to variance in gradient estimates. Our approach suggests a new way to set the learning rate and momentum jointly and adaptively throughout the learning process, which might open up new avenues 
to both practical and theoretical developments in this direction.

\section*{Acknowledgments}
This work has received funding from the 
European Research Council (ERC) under the European Union's Horizon 2020 research and innovation programme (grant agreement no 725594 - time-data); 
Hasler Foundation Program: Cyber Human Systems (project number 16066); 
the Department of the Navy, Office of Naval Research (ONR) under a grant number N62909-17-1-2111; 
the Swiss National Science Foundation (SNSF) under  grant number 200021\_178865 / 1; and 
the Army Research Office under Grant Number W911NF-19-1-0404.
K.Y. Levy acknowledges support from the Israel Science Foundation (grant No. 447/20).
\newpage

\bibliography{refs}
\bibliographystyle{abbrvnat}


\iftrue
\newpage
\appendix

\section{Appendix}
\subsection{Proofs for Section~\ref{sec:StochasticAnalysisSimple}}
\subsubsection{Proof of Equation~\eqref{eq:SumOfGradsStochastic}}
We will use the following lemma that we prove in Section~\ref{sec:Proofem:SunGradSTORMP},
\begin{Lem} \label{lem:SunGradSTORMP}
For both \stormp and simplified \stormp the following holds,
\begin{align*}
	\sum_{t=1}^{T} \|\bg_t\|^2 
	 &\leq
	 \sum_{t=1}^{T} \|\eps_t\|^2 + 2Ba_{T+1}^{-1/3}(\sum_{t=1}^{T} \|d_t\|^2)^{1/3} + \frac{3}{2}L(\sum_{t=1}^T \|d_t\|^2)^{2/3} 
\end{align*}
Eq.~\eqref{eq:SumOfGradsStochastic} directly follows from this lemma by taking $a_{T+1} = 1/T^{2/3}$.
\end{Lem}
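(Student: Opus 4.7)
The plan is to start from the descent-lemma form of smoothness (Eq.~\eqref{eq:smoothness}) applied to consecutive iterates together with the update $x_{t+1}=x_t-\eta_t d_t$, which yields $f(x_{t+1})-f(x_t)\leq -\eta_t \bg_t^\top d_t + (L\eta_t^2/2)\|d_t\|^2$. Because $d_t$ is a biased momentum estimate rather than a descent direction, I would decompose $d_t=\bg_t+\eps_t$ and apply Young's inequality to the cross term to obtain $\bg_t^\top d_t \geq \tfrac{1}{2}\|\bg_t\|^2 - \tfrac{1}{2}\|\eps_t\|^2$. Plugging this back, multiplying through by $2/\eta_t$ and rearranging produces the per-step inequality
$$
\|\bg_t\|^2 \leq \frac{2(\Delta_t-\Delta_{t+1})}{\eta_t} + \|\eps_t\|^2 + L\,\eta_t\|d_t\|^2,
$$
where $\Delta_t := f(x_t)-f(x^*) \in [0,B]$.

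Summing this over $t\in[T]$ leaves three terms on the right-hand side. The first is handled by Abel summation (summation by parts), exactly as in the chain leading to Eq.~\eqref{eq:OfflineCase}: since $\eta_t$ is non-increasing (the sum inside its definition is monotonically growing) and $\Delta_t\in[0,B]$, the telescoping collapses to $\sum_{t=1}^T (\Delta_t-\Delta_{t+1})/\eta_t \leq B/\eta_T$. I would then exploit the monotonicity of $a_t$ -- which holds for both the simplified choice $a_{t+1}=1/t^{2/3}$ and the full adaptive choice $a_{t+1}=1/(1+\sum_{i\leq t}\|g_i\|^2)^{2/3}$ -- to write $1/a_{i+1}\leq 1/a_{T+1}$ for every $i\leq T$. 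This gives $\eta_T^{-1}=\left(\sum_{i=1}^T\|d_i\|^2/a_{i+1}\right)^{1/3}\leq a_{T+1}^{-1/3}\left(\sum_{i=1}^T\|d_i\|^2\right)^{1/3}$, which is exactly the $2Ba_{T+1}^{-1/3}(\sum_t\|d_t\|^2)^{1/3}$ contribution stated in the lemma.

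For the remaining piece $L\sum_{t=1}^T \eta_t\|d_t\|^2$, I would use $a_{i+1}\leq 1$ to drop the $1/a_{i+1}$ weights and obtain $\eta_t \leq \left(\sum_{i=1}^t\|d_i\|^2\right)^{-1/3}$, then invoke Lemma~\ref{lem:technical-1} with $p=1/3$ and $b_t=\|d_t\|^2$ to conclude with $\tfrac{3}{2}L(\sum_t\|d_t\|^2)^{2/3}$. Assembling the three pieces recovers the claimed inequality.

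The main conceptual hurdle is the opening move: since $d_t$ is a biased momentum estimate rather than a descent direction, one cannot appeal to a clean SGD-style descent bound, and the naive approach of lower bounding $\bg_t^\top d_t$ by $\|\bg_t\|^2$ fails. The Young-inequality split $\bg_t^\top d_t\geq \tfrac{1}{2}\|\bg_t\|^2 - \tfrac{1}{2}\|\eps_t\|^2$ is precisely what forces the $\sum_t\|\eps_t\|^2$ term onto the right-hand side, and this term is exactly what the variance-reduction analysis performed elsewhere in the proof is designed to absorb. All subsequent manipulations are standard AdaGrad-style telescoping, relying only on the monotonicity of $a_t$ and $\eta_t$ together with Lemma~\ref{lem:technical-1}.
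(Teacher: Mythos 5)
Your proposal is correct and matches the paper's own proof essentially step for step: the same smoothness-based descent inequality, the same decomposition $d_t=\bg_t+\eps_t$ with the Cauchy--Schwarz/Young split of the cross term, the same telescoping of $\Delta_t/\eta_t$ using $\Delta_t\in[0,B]$, the same application of Lemma~\ref{lem:technical-1} with $p=1/3$, and the same use of the monotonicity of $a_t$ to pull $a_{T+1}^{-1/3}$ out of $1/\eta_T$. No gaps.
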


\subsubsection{Proof of Lemma~\ref{lem:technical-1}}\label{sec:Prooflem:technical-1}
We will prove the lemma by induction on $n$. The proof relies on the arguments in \citep{mcmahan2010adaptive} and generalizes it for any $p \in (0,1)$.
\begin{proof}
 For the base case of $n=1$, we can easily show that the hypothesis holds. 
	\begin{align*}
		\frac{b_1}{b_1^{p}} = b_1^{1-p} \leq \frac{1}{1-p} b_1^{1-p}
	\end{align*}
	Now, assuming that the hypothesis holds for some arbitrary number $n-1 > 1$, we want to show that it holds for $n$, too. Let us define $Z = \sum_{t=1}^{n} b_t$ and $x = b_n$. Then, using the inductive hypothesis for $n-1$,
	\begin{align*}
		\sum_{t=1}^{n} \frac{b_n}{ \left( \sum_{i=1}^{t} b_i \right)^{p} } &\leq \frac{1}{1-p} \left( \sum_{t=1}^{n-1} b_t \right)^{1-p} + \frac{b_n}{ \left( \sum_{t=1}^{n} b_t \right)^{p} } \\
		&= \frac{1}{1-p} ( Z - x )^{1-p} + \frac{x}{Z^{p}}
	\end{align*}
	Let us denote $h(x) = \frac{1}{1-p} ( Z - x )^{1-p} + \frac{x}{Z^{p}}$ is concave in $x$. What we need to show is that, for any choice of allowable $x$, $h(x) \leq \frac{1}{1-p} Z^{1-p} $. Specifically, we want to prove that
	\begin{align*}
		\max_{0 \leq x < Z} h(x) \leq \frac{1}{1-p} Z^{1-p}
	\end{align*}
	First, observe that $h(x)$ is a concave function, hence at the maximum the derivative evaluates to zero. Our aim is to find such $x$. Taking derivative wrt $x$,
	\begin{align*}
		\frac{d h(x)}{dx} = \frac{1}{Z^{p}} - \frac{1}{(Z - x)^{p}},
	\end{align*}
	which evaluates to zero when $x = 0$. Hence,
	\begin{align*}
		\max_{0 \leq x < Z} h(x) = h(0) = \frac{1}{1-p} Z^{1-p} = \frac{1}{1-p} \left( \sum_{t=1}^{n} b_t \right)^{1-p}
	\end{align*}
	which implies that the hypothesis is true:
	\begin{align*}
		\sum_{t=1}^{n} \frac{b_t}{ \left( \sum_{i=1}^{t} b_i \right)^{p} } \leq \frac{1}{1-p} \left( \sum_{t=1}^{n} b_t \right)^{1-p}.
	\end{align*}
\end{proof}

\subsubsection{Proof of Lemma~\ref{lem:SunGradSTORMP}}\label{sec:Proofem:SunGradSTORMP}
Using smoothness together with the update rule implies,
\begin{align*}
\Delta_{t+1}-\Delta_t =
	f(x_{t+1})-f(x_t) 
	&\leq
	 -\eta_t \bg_t^\top d_t + \frac{L\eta_t^2}{2} \|d_t\|^2 \\
	&=
	-\eta_t\|\bg_t\|^2 - \eta_t \bg_t^\top \eps_t +  \frac{L\eta_t^2}{2} \|d_t\|^2 \\
	&\leq
	-\eta_t\|\bg_t\|^2 + \frac{\eta_t}{2}\|\bg_t\|^2 +  \frac{\eta_t}{2}\|\eps_t\|^2+  \frac{L\eta_t^2}{2} \|d_t\|^2~,
\end{align*}
where we defined $\Delta_t: = f(x_t)-f(x^*)$. The second line above uses $d_t = \bg_t+\eps_t$, and the third line uses
$z^\top y \leq \frac{1}{2}(\|z\|^2+ \|y\|^2)$.

Re-arranging the above we get,
$$
\|\bg_t\|^2 \leq \|\eps_t\|^2 + \frac{2}{\eta_t}(\Delta_t - \Delta_{t+1}) +   L\eta_t \|d_t\|^2 \\
$$
Summing over $t$ gives,
\begin{align}\label{eq:SumOfGrads}
	\sum_{t=1}^{T} \|\bg_t\|^2 
	&\leq 
	\sum_{t=1}^{T} \|\eps_t\|^2
	-\frac{2}{\eta_T}\Delta_{T+1}
	  + 2\sum_{t=1}^{T} (\frac{1}{\eta_t} - \frac{1}{\eta_{t-1}})\Delta_t  +   L\sum_{t=1}^{T} \eta_t \|d_t\|^2 \nonumber\\
	  &\leq 
	\sum_{t=1}^{T} \|\eps_t\|^2
	  + 2B\sum_{t=1}^{T} (\frac{1}{\eta_t} - \frac{1}{\eta_{t-1}}) +   L\sum_{t=1}^{T} \frac{\|d_t\|^2}{(\sum_{i=1}^t \|d_t\|^2)^{1/3}} \nonumber\\
	   &\leq 
	  \sum_{t=1}^{T} \|\eps_t\|^2
	  + 2B\frac{1}{\eta_T} +   \frac{3}{2}L(\sum_{t=1}^T \|d_t\|^2)^{2/3} \nonumber\\
	  &\leq
	 \sum_{t=1}^{T} \|\eps_t\|^2 + 2B(\sum_{t=1}^{T} \|d_t\|^2/a_{t+1})^{1/3} + \frac{3}{2}L(\sum_{t=1}^T \|d_t\|^2)^{2/3} \nonumber\\
	 &\leq
	 \sum_{t=1}^{T} \|\eps_t\|^2 + 2B(1/a_{T+1})^{1/3}(\sum_{t=1}^{T} \|d_t\|^2)^{1/3} + \frac{3}{2}L(\sum_{t=1}^T \|d_t\|^2)^{2/3} 
\end{align}
The second line uses $\Delta_t\in[0,B]$, the third line uses Lemma~\ref{lem:technical-1}, and the last line uses the fact that $a_t$ is monotonically decreasing.

\newpage
\subsection{Full Analysis of \stormp (Algorithm~\ref{alg:stormp})}
The proof is composed of three parts: \textbf{(a)} In the first part we bound the cumulative expectation of errors $\E\sum_{t=1}^{\tau^*}\|\eps_t\|^2$, where $\eps_t: = d_t-\bg_t$, and $\tau^*$ is a stopping time after which we can ensure that $1/a_{t+1} -1/a_t\leq 2/3$. \textbf{(b)} In the second part we use our bound on $\E\sum_{t=1}^{\tau^*}\|\eps_t\|^2$ in order to bound the total sum of square errors, $\E\sum_{t=1}^{T}\|\eps_t\|^2$.
\textbf{(c)}  Then, in the last part we divide into two sub-cases the first where $\E\sum_{t=1}^T\|\eps_t\|^2\leq (1/2)\E\sum_{t=1}^T\|\bg_t\|^2$ and its complement. 
In one of these sub-cases we also use the smoothness of the objective together with the update rule, similarly to what we do in Eq.~\eqref{eq:OfflineCase}.

The different parts of the proof are divided between Section~\ref{sec:ProofPart1},~\ref{sec:ProofPart2}~, and ~\ref{sec:ProofPart3}.

\subsubsection{First Part: Bounding $\E\sum_{t=1}^{\tau^*}\|\eps_t\|^2$.}\label{sec:ProofPart1}
The  update rule for $d_t$ induces the following error dynamics,
\begin{align}\label{eq:ErrorDynamicsApp}
\eps_{t} = (1-a_t)\eps_{t-1} + a_t(g_t - \bg_t) + (1-a_t)Z_t
\end{align}
where $Z_t: = (g_t-\tg_{t-1}) - (\bg_t-\bg_{t-1})$. 

 Taking the square and summing up to some $\tau^*\in[T]$ enables to bound,
\begin{align*}
\sum_{t=1}^{\tau^*}\|\eps_t\|^2
& \leq \sum_{t=1}^{\tau^*}(1-a_t)^2\|\eps_{t-1}\|^2 +\sum_{t=1}^{\tau^*}\|(1-a_t)Z_t+ a_t(g_t - \bg_t) \|^2 
+ \sum_{t=1}^{\tau^*}M_t \\
& \leq \sum_{t=1}^{\tau^*}(1-a_t)\|\eps_{t-1}\|^2 +2\sum_{t=1}^{\tau^*}\|Z_t\|^2 + 2\sum_{t=1}^{\tau^*}a_t^2 \|g_t-\bg_t\|^2
+ \sum_{t=1}^{\tau^*}M_t~,
\end{align*}
where we used $\|b+c\|^2\leq 2\|b\|^2 +2\|b\|^2$, as well as $(1-a_t)\leq 1$.
We  have defined $\{M_t: =2(1-a_t)\eps_{t-1}^\top\left( (1-a_t)Z_t + a_t (g_t-\bg_t) \right) \}_{t\in[T]}$, and it is immediate to verify that $\{M_t\}_{t\in[T]}$ is a martingale difference sequence such $\E[M_t\vert \H_{t-1}]=0$, where $\H_t$ is the history to time $t$, i.e., $\H_t: = \{x_1,\xi_1,\xi_2,\xi_3\ldots, \xi_t\}$.

Now let us  define $\beta := \min\{1,1/G^4\}$, and  $\tau^* = \max\{t\in[T]: a_t\geq \beta\}$. Recalling that $a_{t+1}$ is measurable with respect to $\H_t$ implies that $\tau^*\in[T]$ is a stopping time.

Re-arranging the above and using the definition of $\tau^*$ implies,
\begin{align}\label{eq:EpsTauStarBasis}
\beta\sum_{t=1}^{\tau^*}\|\eps_t\|^2 \leq \|\eps_{\tau^*}\|^2 + \sum_{t=1}^{\tau^*-1}a_{t+1}\|\eps_t\|^2 \leq 2\underbrace{\sum_{t=1}^{T}\|Z_t\|^2}_{\rOne} + 2\underbrace{\sum_{t=1}^{T}a_t^2 \|g_t-\bg_t\|^2}_{\rTwo} + 
\underbrace{\sum_{t=1}^{\tau^*}M_t}_{\rThree}
\end{align}
where we used $\tau^*\leq T$, as well as $\beta\leq 1$.
Next we bound the expected value  of the above terms.\\
\textbf{Bounding $\rOne$.} Using smoothness property implies that $\|Z_t\| \leq 2L\|x_t-x_{t-1}\| = 2L\eta_{t-1}\|d_{t-1}\|$. Using the expression for $\eta_{t-1} $ together with Lemma~\ref{lem:technical-1} enables to show,
\begin{align*}
\rOne
\leq 
   4L^2\sum_{t=1}^{T}\frac{\|d_{t-1}\|^2}{(\sum_{i=1}^{t-1} \|d_i\|^2)^{2/3}} 
\leq 
   12L^2 (\sum_{t=1}^{T} \|d_t\|^2)^{1/3} ~.
\end{align*}
where the first inequality uses $\eta_t = 1/\left( \sum_{i=1}^t \|d_i\|^2/a_{i+1}\right)^{1/3} \leq 1/\left( \sum_{i=1}^t \|d_i\|^2\right)^{1/3}$.

\textbf{Bounding $\rTwo$.} 
Since $\E[g_t\vert H_{t-1}] = \bg_t$ and $a_t$ is measurable with respect to $\H_{t-1}$ it follows that
$$
\E[a_t^2\|g_t-\bg_t\|^2]\leq \E[a_t^2(\|g_t\|^2-\|\bg_t\|^2) ]\leq \E[a_t^2\|g_t\|^2 ]
$$
 Using this together with the expression for $a_t$, it is possible to show that,
\begin{align*}
\E\rTwo
\leq 
  \E\sum_{t=1}^{T}\frac{\|g_t\|^2}{(1+\sum_{i=1}^{t-1} \|g_i\|^2)^{4/3}} 
\leq 
   C_1 ~.
\end{align*}
where $C_1: = 12+2G^2$, and the last inequality is due to the following lemma  (recall $G$ is a bound on the gradient norms),
\begin{Lem} \label{lem:ConstSum_Delayed}
For any non-negative real numbers $a_1,\ldots, a_n\in[0,\amax]$,
\begin{align*}
\sum_{i=1}^n \frac{a_i}{(1+\sum_{j=1}^{i-1} a_j)^{4/3}} 
\le 
12+2\amax ~.
\end{align*}
\end{Lem}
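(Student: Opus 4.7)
Let $S_i := \sum_{j=1}^{i} a_j$ with $S_0 = 0$, so the sum of interest is $\sum_{i=1}^n a_i(1+S_{i-1})^{-4/3}$. The plan is to split the index set according to whether the increment $a_i$ is small or large relative to the current cumulative $1+S_{i-1}$: let $J := \{i : a_i \leq 1+S_{i-1}\}$ denote the ``small-jump'' indices and $I := \{1,\ldots,n\} \setminus J$ the ``big-jump'' indices. The two resulting sub-sums will be controlled by completely different mechanisms, and the $12$ and the $2a_{\max}$ in the stated bound will come out of these two pieces respectively.

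For $i \in J$, the consecutive values $1+S_{i-1}$ and $1+S_i = 1+S_{i-1}+a_i$ differ by at most a factor of $2$, so $(1+S_{i-1})^{4/3} \geq (1+S_i)^{4/3}/2^{4/3}$ and hence $a_i/(1+S_{i-1})^{4/3} \leq 2^{4/3}\,a_i/(1+S_i)^{4/3}$. Because $s \mapsto (1+s)^{-4/3}$ is decreasing, a Riemann-sum comparison yields $a_i/(1+S_i)^{4/3} \leq \int_{S_{i-1}}^{S_i}(1+s)^{-4/3}\,ds$; summing over all $i$ telescopes the intervals and gives $\sum_{i=1}^n a_i/(1+S_i)^{4/3} \leq \int_0^{\infty}(1+s)^{-4/3}\,ds = 3$. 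Therefore $\sum_{i \in J} a_i/(1+S_{i-1})^{4/3} \leq 3 \cdot 2^{4/3} < 12$. This is the same telescoping idea underlying Lemma~\ref{lem:technical-1}, adapted to the $p = 4/3$ exponent.

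For $i \in I$, by definition $a_i > 1+S_{i-1}$, so $1+S_i > 2(1+S_{i-1})$: the quantity $1+S$ at least doubles at every big jump. Listing the elements of $I$ in increasing order as $i_1 < i_2 < \cdots$, one shows by induction on $k$ that $1+S_{i_k-1} \geq 2^{k-1}$, because between two consecutive big jumps the sum $1+S$ is non-decreasing (all $a_j \geq 0$) and it more than doubles at each big jump itself. Combined with $a_i \leq a_{\max}$, the $k$-th big-jump contribution is at most $a_{\max}\,2^{-(k-1)\cdot 4/3}$, and summing the resulting geometric series yields $\sum_{i \in I} a_i/(1+S_{i-1})^{4/3} \leq a_{\max}/(1-2^{-4/3}) < 2\,a_{\max}$. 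Adding the two sub-sums gives the desired bound $12 + 2\,a_{\max}$.

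The only mildly delicate step is the doubling estimate $1+S_{i_k-1}\geq 2^{k-1}$, which is what forces the big-jump contributions to form a geometric series and thereby turns the seemingly problematic big-jump case (where the denominator is much smaller than the numerator) into a convergent sum. Everything else is standard integral/telescoping bookkeeping; the numerical constants $3\cdot 2^{4/3}\approx 7.56$ and $1/(1-2^{-4/3})\approx 1.66$ sit comfortably inside $12$ and $2$, so no sharpening is required.
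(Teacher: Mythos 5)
Your proof is correct, but it takes a genuinely different route from the paper's. The paper handles the one-step delay in the denominator by cutting the sum at the first time $N_0$ at which $\sum_{j=1}^{i-1}a_j\geq\amax$: the prefix contributes at most $2\amax$ because each denominator is at least $1$, and beyond $N_0$ the delayed denominator $1+\sum_{j=1}^{i-1}a_j$ dominates the undelayed one $1+\sum_{j=1}^{i}a_j$ (since the accumulated mass already exceeds $a_i\leq\amax$), which reduces the tail to the helper Lemma~\ref{lem:ConstSum}, itself proved by a dyadic-block decomposition contributing the $12$. You instead make a pointwise dichotomy on each increment: when $a_i\leq 1+S_{i-1}$ the denominator changes by at most a factor $2^{4/3}$ across the step, so an integral comparison (a continuous analogue of the dyadic argument and of Lemma~\ref{lem:technical-1}) bounds that part by $3\cdot 2^{4/3}<12$; when $a_i>1+S_{i-1}$ the running sum at least doubles, so those terms form a geometric series bounded by $\amax/(1-2^{-4/3})<2\amax$. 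All your estimates check out: the Riemann-sum comparison is valid because $s\mapsto(1+s)^{-4/3}$ is decreasing and the intervals $[S_{i-1},S_i]$ are essentially disjoint, and the doubling induction $1+S_{i_k-1}\geq 2^{k-1}$ is sound since $S$ is nondecreasing between big jumps. Your argument is somewhat more self-contained (it does not route through a separate helper lemma) and yields slightly sharper constants; the paper's version has the virtue of isolating the delay issue from the summation issue, reusing Lemma~\ref{lem:ConstSum} as a standalone tool. Note also that the roles of $\amax$ differ: in the paper it bounds the total mass of the prefix before $N_0$, whereas in your argument it bounds each individual big jump.
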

We prove this lemma in Appendix~\ref{sec:Prooflem:ConstSum_Delayed}.

\textbf{Bounding $\rThree$.} Since $\tau^*\in[T]$ is a bounded stopping time, and $M_t$ is a martingale difference sequence, then Doob's optional stopping theorem~\cite{levin2017markov} implies $\E\rThree = \E\sum_{t=1}^{\tau^*}M_t =0$. 

\textbf{Conclusion.} Combining the above  bounds inside Eq.~\eqref{eq:EpsTauStarBasis} together with Jensen's inequality for $U(z)=z^{1/3}$ defined over $\reals_{+}$, yields,
\begin{align}\label{eq:EpsBoundFirstPartApp}
\E\sum_{t=1}^{\tau^*}\|\eps_t\|^2 \leq 2C_1/\beta + 24(L^2/\beta) (\E\sum_{t=1}^{T} \|d_t\|^2)^{1/3}~.
\end{align}

\subsubsection{Second Part: Bounding $\E\sum_{t=1}^{T}\|\eps_t\|^2$.}\label{sec:ProofPart2}
Recall the error dynamics of \stormp appearing in Eq.~\eqref{eq:ErrorDynamicsApp}. Dividing by $\sqrt{a_t}$, and taking the square gives,
\begin{align*}
\frac{1}{a_t}\|\eps_t\|^2 
&= (\frac{1}{a_t}-2 +a_t)\|\eps_{t-1}\|^2 + \|(1-a_t)\frac{Z_t}{\sqrt{a_t}} + \sqrt{a_t} (g_t-\bg_t)\|^2 + Y_t\\
&\leq
(\frac{1}{a_t}-1)\|\eps_{t-1}\|^2 + 2\frac{\|Z_t\|^2}{a_t} + 2a_t\|g_t-\bg_t\|^2 + Y_t~,
\end{align*}
where we used $a_t\in[0,1]$, and $(1-a_t)\in[0,1]$, as well as $\|b+c\|^2\leq 2\|b\|^2 + 2\|c\|^2$. We also defined $Y_t: = 2(\frac{1}{\sqrt{a_t}}-\sqrt{a_t})\eps_{t-1}^\top\left((1-a_t)\frac{Z_t}{\sqrt{a_t}} + \sqrt{a_t} (g_t-\bg_t)\right)$. Note that $\E[Y_t\vert \H_{t-1}]=0$;  therefore $Y_t$ is a martingale difference sequence.

Re-arranging the above and summing gives,
\begin{align}\label{eq:FullEpsBasis}
\sum_{t=1}^{T}\|\eps_{t-1}\|^2 \leq 
\underbrace{-\frac{1}{a_T}\|\eps_T\|^2}_{\rA} 
+
\underbrace{\sum_{t=1}^{T}(\frac{1}{a_{t+1}}-\frac{1}{a_t})\|\eps_t\|^2}_{\rB}  
+ 
2\underbrace{\sum_{t=1}^{T}\frac{\|Z_t\|^2}{a_t}}_{\rC}
 +
 2\underbrace{\sum_{t=1}^{T}a_t\|g_t-\bg_t\|^2}_{\rD}
  +
  \underbrace{ \sum_{t=1}^{T}Y_t}_{\rE}~.
\end{align}
Next, we bound the expected value if each of the above terms.

\textbf{Bounding $\rA$:} Since $a_T\leq 1$ we can bound $-{\E \norm{\eps_{T}}^2}/{a_T}\leq -\E \norm{\eps_{T}}^2$ \\

\textbf{Bounding $\rB$.} 
We will use the following lemma which we prove in Section~\ref{sec:Prooflem:AtsTauStar},
\begin{Lem}\label{lem:AtsTauStar}
The following holds,
$$
1/a_{t+1} \leq 1/\tbeta~;\forall t\leq \tau^*
$$
where $1/\tbeta := (1/\beta^{3/2}+ G^2)^{2/3}$~.

Moreover,
$$
1/a_{t+1} - 1/a_t \leq 2/3~; \quad \forall t\geq \tau^*+1
$$
\end{Lem}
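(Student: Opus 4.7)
The plan is to unwind the explicit formula $1/a_{t+1} = \left(1 + \sum_{i=1}^{t}\|g_i\|^2\right)^{2/3}$, combine it with the defining property of the stopping time $\tau^*$ (namely $a_t \geq \beta$ for $t \leq \tau^*$ and $a_t < \beta$ for $t \geq \tau^* + 1$), and then invoke the bounded gradient assumption $\|g_t\|^2 \leq G^2$. Throughout, I will use the shorthand $S_{t-1} := 1 + \sum_{i=1}^{t-1}\|g_i\|^2$, so that $1/a_t = S_{t-1}^{2/3}$ and $1/a_{t+1} = (S_{t-1} + \|g_t\|^2)^{2/3}$; the whole lemma is then a matter of reading off bounds on $S_{t-1}$ and $\|g_t\|^2$ on either side of $\tau^*$.

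For the first inequality, suppose $t \leq \tau^*$. By definition of $\tau^*$ we have $a_t \geq \beta$, equivalently $S_{t-1} = (1/a_t)^{3/2} \leq 1/\beta^{3/2}$. Plugging this and $\|g_t\|^2 \leq G^2$ into the formula for $1/a_{t+1}$ gives
\[
\frac{1}{a_{t+1}} = (S_{t-1} + \|g_t\|^2)^{2/3} \leq (1/\beta^{3/2} + G^2)^{2/3} = 1/\tbeta,
\]
which is exactly the stated bound.

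For the second inequality, I will use concavity of $z \mapsto z^{2/3}$ on $\reals_{+}$, which yields the gradient inequality $(z+y)^{2/3} - z^{2/3} \leq (2/3)\, z^{-1/3} y$ for $z,y > 0$; this is the same trick already used when bounding term $\rB$ in the simplified analysis. Applying it with $z = S_{t-1}$ and $y = \|g_t\|^2$ gives
\[
\frac{1}{a_{t+1}} - \frac{1}{a_t} \leq \frac{2}{3}\, S_{t-1}^{-1/3}\, \|g_t\|^2 = \frac{2}{3}\, a_t^{1/2}\, \|g_t\|^2.
\]
When $t \geq \tau^* + 1$, the definition of $\tau^*$ forces $a_t < \beta \leq 1/G^4$, so $a_t^{1/2} < 1/G^2$; combined with $\|g_t\|^2 \leq G^2$ this bounds the right-hand side by $2/3$.

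The lemma is a bookkeeping calculation rather than a deep argument, so I do not expect any serious obstacle. The one point that needs care is verifying that the choice $\beta = \min\{1,1/G^4\}$ serves both purposes simultaneously: the factor $1/G^4$ is exactly what allows $a_t^{1/2}\|g_t\|^2 \leq 1$ after $\tau^*$, while the $\min$ with $1$ is what ensures $\beta \leq 1$ (used elsewhere, e.g.\ when dropping the factor $\beta$ in Eq.~\eqref{eq:EpsTauStarBasis}). The formula $1/\tbeta = (1/\beta^{3/2} + G^2)^{2/3}$ then arises naturally as what the algebra produces at the boundary $t = \tau^*$.
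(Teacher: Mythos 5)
Your proof is correct and essentially identical to the paper's: both parts rest on the identity $(1/a_{t+1})^{3/2}=(1/a_t)^{3/2}+\|g_t\|^2$ together with $a_t\geq\beta$ before $\tau^*$ (giving the $\tbeta$ bound), and on the concavity inequality $(z+y)^{2/3}-z^{2/3}\leq\tfrac{2}{3}z^{-1/3}y$ with $a_t<\beta\leq 1/G^4$ after $\tau^*$ (giving the $2/3$ bound). The only cosmetic difference is that you bound $1/a_{t+1}$ uniformly for all $t\leq\tau^*$ in one step, whereas the paper handles $t\leq\tau^*$ via $1/a_t\leq 1/\beta\leq 1/\tbeta$ and treats the boundary index $\tau^*+1$ separately.
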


Lemma~\ref{lem:AtsTauStar} enables to decompose and bound $\rB$ as follows,
\begin{align}
\sum_{t=1}^T(\frac{1}{a_{t+1}}-\frac{1}{a_t})\|\eps_t\|^2  &=
\sum_{t=1}^{\tau^*}(\frac{1}{a_{t+1}}-\frac{1}{a_t})\|\eps_t\|^2 +\sum_{t=\tau^*+1}^T(\frac{1}{a_{t+1}}-\frac{1}{a_t})\|\eps_t\|^2 \nonumber\\
&\leq
\frac{1}{\tbeta}\sum_{t=1}^{\tau^*}\|\eps_t\|^2 +\frac{2}{3}\sum_{t=\tau^*+1}^T\|\eps_t\|^2 
\leq
\frac{1}{\tbeta}\sum_{t=1}^{\tau^*}\|\eps_t\|^2 +\frac{2}{3}\sum_{t=1}^T\|\eps_t\|^2~.
\end{align}
Thus,
\begin{align}\label{eq:BtermBoundApp}
\E\rB 
&\leq
\E\frac{1}{\tbeta}\sum_{t=1}^{\tau^*}\|\eps_t\|^2 +\frac{2}{3}\E\sum_{t=1}^T\|\eps_t\|^2 \nonumber\\
&\leq
\frac{2C_1}{\beta\tbeta} + 24\frac{L^2}{\beta\tbeta} (\E\sum_{t=1}^{T} \|d_t\|^2)^{1/3} + \frac{2}{3}\E\sum_{t=1}^T\|\eps_t\|^2 
\end{align}
where we have used Eq.~\eqref{eq:EpsBoundFirstPartApp}.

\textbf{Bounding $\rC$.} Recalling that $\|Z_t\| \leq 2L\|x_t-x_{t-1}\| = 2L\eta_{t-1}\|d_{t-1}\|$, and using the expression for $\eta_{t-1} $ together with Lemma~\ref{lem:technical-1} enables to show,

Thus,
\begin{align}\label{eq:ZtBound1App}
\sum_{t=1}^{T}\frac{\|Z_t\|^2}{a_t} 
&\leq 
    4L^2\sum_{t=1}^T\eta_{t-1}^2 \|d_{t-1}\|^2/a_t  \nonumber\\
&=
4L^2\sum_{t=1}^T \frac{\|d_{t-1}\|^2/a_t}{\left(\sum_{i=1}^{t-1} \|d_i\|^2/a_{i+1} \right)^{2/3}} \nonumber\\
&\leq
12L^2\left(\sum_{t=1}^{T-1} \|d_t\|^2/a_{t+1} \right)^{1/3} \nonumber\\
&\leq
12L^2\frac{1}{a_{T}^{1/3}}\left(\sum_{t=1}^{T-1} \|d_t\|^2 \right)^{1/3} \nonumber\\
&\leq
12L^2\left(1+\sum_{t=1}^{T} \|g_t\|^2 \right)^{2/9}\left(\sum_{t=1}^{T} \|d_t\|^2 \right)^{1/3}~,
\end{align}
where we used the fact that $a_t$ is non-increasing.

Now, let us recall Young's inequality which states that for any $a,b>0$, and $p,q>1:~ \frac{1}{p}+\frac{1}{q}=1$ we have $ab\leq a^p/p + b^q/q$.
This implies that for any $a,b,\rho>0$ and $p=\frac{3}{2}, q=3$, we have,
\begin{align}\label{eq:YoungsIneqVersion}
a^{2/9}b^{1/3} = (a\rho^{9/2})^{2/9}(b/\rho^3)^{1/3} \leq  \frac{(a\rho^{9/2})^{2p/9}}{p} +  \frac{(b/\rho^3)^{q/3}}{q} = \frac{2}{3}a^{1/3}\rho^{3/2} + \frac{b}{3\rho^3}
\end{align}
Thus, taking $\rho=(512L^2)^{1/3}$, $a = 1+\sum_{t=1}^{T} \|g_t\|^2$, $b=\sum_{t=1}^{T} \|d_t\|^2 $, and using Young's inequality inside Eq.~\eqref{eq:ZtBound1App} implies,
\begin{align}\label{eq:ZtBound2App} 
\sum_{t=1}^{T}\frac{\|Z_t\|^2}{a_t} 
&\leq 
512L^3\left(1+\sum_{t=1}^{T} \|g_t\|^2\right)^{1/3}+
\frac{1}{128}\sum_{t=1}^{T} \|d_t\|^2
\end{align}
\paragraph{Bounding Term $\rD$:}
Note that  $a_t$ is measurable with respect to $\H_{t-1}$, and $\E[g_t\vert \H_{t-1}] = \bg_t$, therefore using smoothing gives,
$$
\E[a_t\|g_t-\bg_t\|^2] = \E[a_t(\|g_t\|^2 -\|\bg_t\|^2)] \leq  \E[a_t\|g_t\|^2 ] 
$$
 Thus,
\begin{align*}
\E[\rD]&:= \E\sum_{t=1}^{T}a_t \|g_t-\bg_t\|^2 \\
&\leq 
\E\sum_{t=1}^T a_t\|g_t\|^2\\
&=
\E\sum_{t=1}^T\frac{\|g_t\|^2}{(1+\sum_{i=1}^{t-1}\|g_i\|^2)^{2/3}} \\
&\leq 
G^2 + 6\E \left(1+\sum_{t=1}^{T} \|g_t\|^2\right)^{1/3},
\end{align*}
where the last line is due to the following lemma which is a modified and time-shifted version of Lemma~\ref{lem:technical-1}. We defer its proof to Appendix~\ref{sec:proof-shifted-technical-1}.
\begin{Lem} \label{lem:shifted-technical-1}
	Let $b_1, ..., b_n \in (0, b]$ be a sequence of non-negative real numbers for some positive real number $b$, $b_0 > 0$ and $p \in (0, 1)$ a rational number. Then,
	\begin{align*}
		\sum_{i=1}^{n} \frac{b_i}{ \left(b_0 + \sum_{j=1}^{i-1} b_j \right)^{p} } \leq \frac{b}{(b_0)^p} + \frac{2}{1-p} \left( b_0 + \sum_{i=1}^{n} b_i \right)^{1-p}
	\end{align*}
\end{Lem}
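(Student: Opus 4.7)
The plan is to compare the ``shifted'' sum $\sum b_i/S_i^p$ to the ``unshifted'' sum $\sum b_i/S_{i+1}^p$ (the latter is exactly what the concavity trick behind Lemma~\ref{lem:technical-1} naturally controls), and to close the gap between the two using the uniform bound $b_i \leq b$ together with a single telescoping step. Throughout I write $S_i := b_0 + \sum_{j=1}^{i-1} b_j$, so that $S_1 = b_0$, $S_{i+1} = S_i + b_i$, and $S_{n+1} = b_0 + \sum_{i=1}^n b_i$.

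First, I would bound the unshifted sum by mimicking the concavity step in the proof of Lemma~\ref{lem:technical-1}. Since $x \mapsto x^{1-p}$ is concave on $\reals_+$ (as $p \in (0,1)$), the mean-value inequality gives, for every $i$,
\begin{align*}
S_{i+1}^{1-p} - S_i^{1-p} \geq (1-p)\, S_{i+1}^{-p}\,(S_{i+1} - S_i) = (1-p)\, b_i\, S_{i+1}^{-p}.
\end{align*}
Dividing by $(1-p)$ and telescoping from $i=1$ to $n$ yields
\begin{align*}
\sum_{i=1}^n \frac{b_i}{S_{i+1}^p} \leq \frac{1}{1-p}\bigl(S_{n+1}^{1-p} - b_0^{1-p}\bigr) \leq \frac{1}{1-p}\Bigl(b_0 + \sum_{i=1}^n b_i\Bigr)^{1-p}.
\end{align*}

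Second, I would bound the gap between the shifted and unshifted sums by using $b_i \leq b$ and the fact that $x \mapsto x^{-p}$ is decreasing, so that $S_i^{-p} \geq S_{i+1}^{-p}$:
\begin{align*}
\sum_{i=1}^n \frac{b_i}{S_i^p} - \sum_{i=1}^n \frac{b_i}{S_{i+1}^p}
&= \sum_{i=1}^n b_i\,\bigl(S_i^{-p} - S_{i+1}^{-p}\bigr) \\
&\leq b \sum_{i=1}^n \bigl(S_i^{-p} - S_{i+1}^{-p}\bigr) = b\bigl(S_1^{-p} - S_{n+1}^{-p}\bigr) \leq \frac{b}{b_0^p},
\end{align*}
where the middle equality telescopes. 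Adding the two estimates yields the claim (in fact with coefficient $\tfrac{1}{1-p}$ in front of $(b_0+\sum b_i)^{1-p}$ rather than $\tfrac{2}{1-p}$, which is strictly tighter than what is stated).

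I do not expect any genuine obstacle. The key observation is that $\sum b_i/S_i^p$ is a \emph{left} Riemann-type sum for $\int x^{-p}\,dx$ and therefore overshoots the integral, while the concavity argument of Lemma~\ref{lem:technical-1} naturally bounds the corresponding \emph{right} Riemann-type sum $\sum b_i/S_{i+1}^p$ from above by the integral; the uniform increment bound $b_i \leq b$ is precisely what one needs to convert between the two via a telescoping correction of size $b/b_0^p$. The rationality hypothesis on $p$ plays no role in this argument and can be dropped.
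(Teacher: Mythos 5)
Your proof is correct, and it takes a genuinely different route from the paper's. The paper proves this lemma by a case split on whether $b_0\geq b$ or $b_0\leq b$: in the first case it drops $b_0$ down to $b$ and absorbs the shift to reduce to Lemma~\ref{lem:technical-1}; in the second it introduces a threshold time $T_0$ (the first index at which the partial sum exceeds $b$), bounds the early terms crudely by $b/b_0^p$, and for the tail uses $\tfrac12\sum_{j<i}b_j\geq \tfrac12 b\geq \tfrac12 b_i$ to again reduce to Lemma~\ref{lem:technical-1}, which is where the factor $2$ in $\tfrac{2}{1-p}$ comes from. You instead split the sum into the ``right-endpoint'' sum $\sum_i b_i/S_{i+1}^p$, controlled by the same concavity-plus-telescoping device that underlies Lemma~\ref{lem:technical-1}, plus the left-minus-right gap $\sum_i b_i(S_i^{-p}-S_{i+1}^{-p})$, which telescopes to at most $b/b_0^p$ once you pull out the uniform bound $b_i\leq b$. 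Both of your telescoping steps check out (the gradient inequality for the concave map $x\mapsto x^{1-p}$ evaluated at the right endpoint, and monotonicity of $x\mapsto x^{-p}$ making each gap term nonnegative). Your argument buys three things: it avoids the case analysis entirely, it sharpens the constant to $\tfrac{1}{1-p}$ in place of $\tfrac{2}{1-p}$, and it makes transparent that the hypothesis that $p$ is rational is never used (it is not used in the paper's proof either). The paper's version buys nothing extra here beyond reusing Lemma~\ref{lem:technical-1} as a black box; your observation about left versus right Riemann-type sums is the cleaner way to see why the uniform increment bound $b_i\leq b$ is exactly the needed hypothesis.
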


\paragraph{Bounding Term $\rE$:}
Since $\{Y_t\}_{t\in[T]}$ is a martingale difference sequence we have,
$$
\E\rE = \E\sum_{t=1}^TY_t = 0~.
$$

\paragraph{To Summarize:} Combining the above bounds inside Eq.~\eqref{eq:FullEpsBasis} we conclude that,
\begin{align}\label{eq:FinalFormSplitCases}
\frac{1}{3}\E\sum_{t=1}^{T}\|\eps_{t}\|^2 
 &\leq \frac{24L^2}{\beta\tbeta} \E(\sum_{t=1}^{T} \|d_t\|^2)^{1/3}  +\frac{2C_1}{\beta\tbeta} +2G^2  \nonumber\\
&+ 
(1024L^3+12)\E\left(1+\sum_{t=1}^{T} \|g_t\|^2\right)^{1/3}+
\frac{1}{64}\E\sum_{t=1}^{T} \|d_t\|^2  \nonumber\\
&\leq \frac{24L^2}{\beta\tbeta} (\E\sum_{t=1}^{T} \|d_t\|^2)^{1/3}  +\frac{2C_1}{\beta\tbeta} + 2G^2  \nonumber\\
&+ 
(1024L^3+12)\left(1+\E\sum_{t=1}^{T} \|g_t\|^2\right)^{1/3}+
\frac{1}{64}\E\sum_{t=1}^{T} \|d_t\|^2 
\end{align}
where we have used Jensen's inequality for the concave function $G(z) = z^{1/3}~; z\geq0$.

\subsubsection{Final Part of the Proof}\label{sec:ProofPart3}
We divide the final part of the proof into two subcases:

\textbf{Case 1: Assume $\E\sum_{t=1}^{T}\|\eps_{t}\|^2\geq (1/2) \E\sum_{t=1}^{T}\|\bg_{t}\|^2$.}
Using the condition of this subcase implies 
$$
\E\sum_t\|d_t\|^2 \leq 2\E\sum_{t=1}^{T} \|\bg_t\|^2 + 2\E\sum_{t=1}^{T} \|\eps_t\|^2 \leq 6\E\sum_{t=1}^{T} \|\eps_t\|^2
$$
Plugging this into Eq.~\eqref{eq:FinalFormSplitCases} gives,
\begin{align*}
\frac{1}{3}\E\sum_{t=1}^{T}\|\eps_{t}\|^2 
 &\leq \frac{24L^2}{\beta\tbeta} (6\E\sum_{t=1}^{T} \|\eps_t\|^2)^{1/3}  +\frac{2C_1}{\beta\tbeta} + 2G^2  \nonumber\\
&+ 
(1024L^3+12)\left(1+\sigma^2T+\E\sum_{t=1}^{T} \|\bg_t\|^2\right)^{1/3}+
\frac{6}{64}\E\sum_{t=1}^{T} \|\eps_t\|^2 
\end{align*}
where the first line uses $\E\|g_t\|^2 = \E\|\bg_t\|^2 + \E\|g_t-\bg_t\|^2 \leq \E\|\bg_t\|^2 +\sigma^2$.

Re-arranging and using $ \E\sum_{t=1}^{T}\|\bg_{t}\|^2\leq 2\E\sum_{t=1}^{T}\|\eps_{t}\|^2$ gives,
\begin{align*}
\frac{1}{5}\E\sum_{t=1}^{T}\|\eps_{t}\|^2 
 &\leq \frac{24L^2}{\beta\tbeta} (6\E\sum_{t=1}^{T} \|\eps_t\|^2)^{1/3}  +\frac{2C_1}{\beta\tbeta} + 2G^2  \nonumber\\
&+ 
(1024L^3+12)\left(1+\sigma^2T+2\E\sum_{t=1}^{T} \|\eps_t\|^2\right)^{1/3}
\end{align*}
And the above implies,
\begin{align}\label{eq:Case1AppFinished}
 \E\sum_{t=1}^{T}\|\bg_{t}\|^2\leq 2\E\sum_{t=1}^{T}\|\eps_{t}\|^2 \leq O\left(1+\frac{C_1}{\beta\tbeta} +\left(\frac{L^2}{\beta\tbeta}\right)^{3/2}+ G^2 + L^3 + L^{9/2}+L^3\sigma^{2/3}T^{1/3} \right)
\end{align}

\textbf{Case 2: Assume $\E\sum_{t=1}^{T}\|\eps_{t}\|^2\leq (1/2) \E\sum_{t=1}^{T}\|\bg_{t}\|^2$.}
Using Lemma~\ref{lem:SunGradSTORMP} we get,
\begin{align}\label{eq:Case2App1}
	\sum_{t=1}^{T} \|\bg_t\|^2 
	 &\leq
	 \sum_{t=1}^{T} \|\eps_t\|^2 + 2B(1+\sum_{t=1}^{T} \|g_t\|^2)^{2/9}(\sum_{t=1}^{T} \|d_t\|^2)^{1/3} + \frac{3}{2}L(\sum_{t=1}^T \|d_t\|^2)^{2/3}  \nonumber\\
	 &\leq
	  \sum_{t=1}^T \|\eps_t\|^2+ \frac{3}{2}L(\sum_{t=1}^T \|d_t\|^2)^{2/3}+20B^{3/2}(1+\sum_{t=1}^{T}\|g_t\|^2)^{1/3}+\frac{1}{64}\sum_{t=1}^T \|d_t\|^2
\end{align}
where the second line uses a version of Young's inequality appearing in Eq.~\eqref{eq:YoungsIneqVersion} together with
 taking $\rho:=(128B/3)^{1/3}$, $a: = 1+\sum_{t=1}^{T} \|g_t\|^2$, and $b:=\sum_{t=1}^{T} \|d_t\|^2 $.

Using the condition of this subcase implies 
$$
\E\sum_t\|d_t\|^2 \leq 2\E\sum_{t=1}^{T} \|\bg_t\|^2 + 2\E\sum_{t=1}^{T} \|\eps_t\|^2 \leq 3\E\sum_{t=1}^{T} \|\bg_t\|^2
$$

Taking expectation of Eq.~\eqref{eq:Case2App1}, and using the above together with the condition gives,
\begin{align*}\label{eq:Case2App1}
	\E\sum_{t=1}^{T} \|\bg_t\|^2 
	 &\leq
	  \E\sum_{t=1}^T \|\eps_t\|^2+ \frac{3}{2}L(\E\sum_{t=1}^T \|d_t\|^2)^{2/3}+20B^{3/2}(1+\E\sum_{t=1}^{T}\|g_t\|^2)^{1/3}+\frac{1}{64}\E\sum_{t=1}^T \|d_t\|^2 \\
	  &\leq
	  \left(\frac{1}{2} + \frac{3}{64}\right)\E\sum_{t=1}^{T} \|\bg_t\|^2 + \frac{3}{2}L(3\E\sum_{t=1}^T \|\bg_t\|^2)^{2/3}+20B^{3/2}(1+\sigma
	  ^2T+\E\sum_{t=1}^{T}\|\bg_t\|^2)^{1/3}
\end{align*}
where we have used Jensen's inequality for the functions $z^{1/3},z^{2/3}$ defined over $\reals_{+}$, We also uses 
$\E\|g_t\|^2 = \E\|\bg_t\|^2 + \E\|g_t-\bg_t\|^2 \leq \E\|\bg_t\|^2 +\sigma^2$.

Re-arranging the above we conclude that,
$$
\E\sum_{t=1}^{T} \|\bg_t\|^2 \leq 6L(3\E\sum_{t=1}^T \|\bg_t\|^2)^{2/3}+ 80B^{3/2}(1+\sigma
	  ^2T+\E\sum_{t=1}^{T}\|\bg_t\|^2)^{1/3}
$$
This immediately implies that,
\begin{equation}\label{eq:Case2AppFinished}
\E\sum_{t=1}^{T} \|\bg_t\|^2 \leq O(1+L^3 + B^{9/4}+B^{3/2}\sigma^{2/3} T^{1/3})
\end{equation}

\textbf{Concluding}
From Equations~\eqref{eq:Case1AppFinished},~\eqref{eq:Case2AppFinished} it follows that,
\begin{equation}\label{eq:Case2AppFinished}
\E\sum_{t=1}^{T} \|\bg_t\|^2 \leq O(M^2+\kappa^2\sigma^{2/3} T^{1/3})
\end{equation}
where $\kappa^2: = B^{3/2}+L^3$, and $M^2:= 1+L^{9/2}+B^{9/4}+G^{10} + (L^2G^8)^{3/2}$.

Using the definition if $\bx_T$ together with Jensen's inequality gives,
$$
\E\|\nabla f(\bx_T)\| = O\left( \frac{M}{\sqrt{T}} + \frac{\kappa \sigma^{1/3}}{T^{1/3}}\right)~.
$$
which concludes the proof.

\newpage
\subsection{Additional Proofs}
\subsubsection{Proof of Lemma~\ref{lem:ConstSum_Delayed}}
\label{sec:Prooflem:ConstSum_Delayed}
\begin{proof}[Proof of Lemma~\ref{lem:ConstSum_Delayed}]
Lets define,
$$
N_0 = \min \left\{i\in[n]: \sum_{j=1}^{i-1}a_j \geq \amax  \right\}~.
$$
Thus, we can decompose the sum as follows,
\begin{align*}
\sum_{i=1}^n \frac{a_i}{(1+\sum_{j=1}^{i-1} a_j)^{4/3}} 
&=\sum_{i=1}^{N_0-1} \frac{a_i}{(1+\sum_{j=1}^{i-1} a_j)^{4/3}}  + \sum_{i=N_0}^{n} \frac{a_i}{(1+\sum_{j=1}^{i-1} a_j)^{4/3}}\\
&\leq
\sum_{i=1}^{N_0-1} a_i +  \sum_{i=N_0}^{n} \frac{a_i}{(1+\sum_{j=1}^{N_0-1} a_j  + \sum_{j=N_0}^{i-1} a_i )^{4/3}}\\
&\leq
2\amax + \sum_{i=N_0}^{n} \frac{a_i}{(1+\amax + \sum_{j=N_0}^{i-1} a_i )^{4/3}}\\
&\leq
2\amax +  \sum_{i=N_0}^{n}\frac{a_i}{(1+a_i + \sum_{j=N_0}^{i-1} a_i )^{4/3}}\\
&\leq
2\amax+12
\end{align*}
where the second and third lines use the definition of $N_0$ and definition of $\amax$, the fourth line uses $a_i\leq \amax$, and the last line uses the following helper lemma that we prove in Section~\ref{sec:Prooflem:ConstSum}.
\begin{Lem}\label{lem:ConstSum}
For any non-negative real numbers $a_1,\ldots, a_n\in[0,\amax]$,
\begin{align*}
\sum_{i=1}^n \frac{a_i}{(1+\sum_{j=1}^{i} a_j)^{4/3}} 
\leq 
12 ~.
\end{align*}
\end{Lem}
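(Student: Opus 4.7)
The plan is to prove Lemma~\ref{lem:ConstSum} by a direct integral-comparison argument exploiting the monotonicity of $s \mapsto s^{-4/3}$. First, I would introduce the partial sums $S_i := 1 + \sum_{j=1}^{i} a_j$ for $i \geq 0$, so that $S_0 = 1$, $S_i - S_{i-1} = a_i \geq 0$, and the $i$-th summand equals $(S_i - S_{i-1})/S_i^{4/3}$.

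Second, I would observe that $s \mapsto s^{-4/3}$ is positive and strictly decreasing on $[1,\infty)$, so for every $i$ with $a_i > 0$ and every $s \in [S_{i-1}, S_i]$ we have $s^{-4/3} \geq S_i^{-4/3}$. Integrating this pointwise inequality over $[S_{i-1}, S_i]$ yields
$$\frac{a_i}{S_i^{4/3}} \;=\; (S_i - S_{i-1})\, S_i^{-4/3} \;\leq\; \int_{S_{i-1}}^{S_i} s^{-4/3}\, ds,$$
and the case $a_i = 0$ is trivial since both sides vanish.

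Third, since the intervals $[S_{i-1}, S_i]$ are non-overlapping and tile $[1, S_n]$ (because $S_i$ is monotonically non-decreasing in $i$), summing gives
$$\sum_{i=1}^{n} \frac{a_i}{S_i^{4/3}} \;\leq\; \int_{1}^{S_n} s^{-4/3}\, ds \;=\; 3\bigl(1 - S_n^{-1/3}\bigr) \;\leq\; 3 \;\leq\; 12,$$
which establishes the claim (with a sharper implicit constant of $3$).

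There is essentially no serious obstacle here; the only verification is that the telescoping intervals tile $[1, S_n]$ without overlap, which follows from $S_i$ being non-decreasing. Notably, the hypothesis $a_i \in [0, \amax]$ is not used in this bound, which explains the slack in the stated constant $12$. This slack is convenient upstream in Lemma~\ref{lem:ConstSum_Delayed}, where the $2\amax$ term absorbs the portion of the sum accumulated before $S_i$ is large enough to make the integral tail small, and Lemma~\ref{lem:ConstSum} is then applied to the shifted tail starting at $S_{N_0 - 1} \geq 1 + \amax$.
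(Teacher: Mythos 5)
Your proof is correct, and it takes a genuinely different route from the paper's. The paper proves Lemma~\ref{lem:ConstSum} by a dyadic decomposition: it defines block boundaries $N_k$ at which the partial sums $\sum_{j\leq i} a_j$ first exceed $2^k$, bounds the contribution of the $k$-th block by $2^{k+1}/(2^k)^{4/3}$, and sums the resulting geometric series $2\sum_{k\geq 1} 2^{-k/3} \leq 10$ to land at the constant $12$. You instead compare each summand $(S_i - S_{i-1})S_i^{-4/3}$ to $\int_{S_{i-1}}^{S_i} s^{-4/3}\,ds$ using monotonicity of $s^{-4/3}$, telescope, and evaluate the integral in closed form. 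Your argument is shorter, yields the sharper constant $3(1 - S_n^{-1/3}) \leq 3$, and sidesteps a small cosmetic wrinkle in the paper's proof (some of the sets defining $N_k$ may be empty, which the paper does not address). You are also right that neither proof uses the hypothesis $a_i \leq \amax$ in this lemma; that hypothesis only matters in the shifted version, Lemma~\ref{lem:ConstSum_Delayed}, where the prefix of the sum before the denominator reaches $1+\amax$ is absorbed into the $2\amax$ term. The only thing the paper's approach buys is that the dyadic-blocking technique generalizes to settings where no clean antiderivative is available, but here the integral comparison is strictly the better choice.
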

\end{proof}

\subsubsection{Proof  of Lemma~\ref{lem:ConstSum}} \label{sec:Prooflem:ConstSum}
\begin{proof}[Proof of Lemma~\ref{lem:ConstSum}]
Define,
$$
N_0 = \max \left\{i\in[n]: \sum_{j=1}^{i}a_j \leq 2  \right\}~.
$$
as well as for any $k\geq 1$
$$
N_k = \max \left\{i\in[n]: 2^{k}<\sum_{j=1}^{i}a_j \leq  2^{k+1} \right\}~.
$$
Now lets split the sum according to the $N_k$'s
\begin{align*}
\sum_{i=1}^n \frac{a_i}{(1+\sum_{j=1}^{i} a_j)^{4/3}} 
&=
\sum_{i=1}^{N_0} \frac{a_i}{(1+\sum_{j=1}^{i} a_j)^{4/3}}  +\sum_{k=1}^\infty\sum_{i = N_{k-1}+1}^{N_k}  \frac{a_i}{(1+\sum_{j=1}^{i} a_j)^{4/3}} \\
&\leq
\sum_{i=1}^{N_0} a_i + \sum_{k=1}^\infty \frac{1}{(2^{k})^{4/3}}\sum_{i = 1}^{N_k} a_i \\
&\leq
2 + \sum_{k=1}^\infty \frac{2^{k+1}}{(2^{k})^{4/3}} \\
&=
2 + \sum_{k=1}^\infty \frac{2^{k+1}}{(2^{k})^{4/3}} \\
&=
2 + 2\sum_{k=1}^\infty \left(\frac{1}{2^{1/3}}\right)^k \\
&\leq
2+2\cdot\frac{1}{1-2^{-1/3}} \\
&\leq 12~.
\end{align*}
\end{proof}

\subsubsection{Proof of Lemma~\ref{lem:AtsTauStar}}\label{sec:Prooflem:AtsTauStar}
\begin{proof}
The lemma has two parts.
\paragraph{Proof of first part.}
Recalling that   $\tau^* = \max\{t\in[T]: a_t\geq \beta\}$ for $\beta = \min\{1,1/G^4\}$ implies that
${1}/{a_t} \leq 1/\beta~;\forall t\leq \tau^*$. Moreover, using the definition of $a_t$ and boundedness of gradients we obtain,
$$
\left({1}/{a_{\tau^*+1}}\right)^{3/2} = \left({1}/{a_{\tau^*}}\right)^{3/2} + \|g_{\tau^*}\|^2 \leq \frac{1}{\beta^{3/2}} + G^2
$$
Defining  $\frac{1}{\tbeta} : = \left(\frac{1}{\beta^{3/2}} +G^2 \right)^{2/3}$ implies that,
$$
{1}/{a_t} \leq 1/\tbeta~;\qquad\forall t\leq \tau^*+1~.
$$

\paragraph{Proof of second part.}
First note that the function $H(y): = y^{2/3}$ is concave over $\reals_{+}$. Applying the gradient inequality for concave functions imply that,
$$
\forall y_1,y_2\geq 0~; H(y_2) - H(y_1)\leq \nabla H(y_1)^\top (y_2-y_1) = \frac{2}{3}\frac{1}{y_1^{1/3}} \cdot(y_2-y_1) ~.
$$
Therefore, for any $t\geq \tau^*+1$
\begin{align*}
\frac{1}{a_{t+1}} - \frac{1}{a_t} &= (1+\sum_{i=1}^{t-1}\|g_i\|^2 + \|g_t\|^2)^{2/3} - (1+\sum_{i=1}^{t-1}\|g_i\|^2)^{2/3}\\
& \leq
 \frac{2}{3}\frac{\|g_t\|^2}{(1+\sum_{i=1}^{t-1}\|g_i\|^2)^{1/3}} \\
 &=
 \frac{2}{3}\sqrt{a_t}\|g_t\|^2 \\
 &\leq 
  \frac{2}{3}\sqrt{\beta}G^2 \\
  &\leq 
  \frac{2}{3}~.
\end{align*}
where the fourth line uses the definition of $\tau^*$, and the last line uses the definition of $\beta$.

\end{proof}

\subsubsection{Proof of Lemma~\ref{lem:shifted-technical-1}}
\label{sec:proof-shifted-technical-1}
\begin{proof}
	Let $b_1, ..., b_n \in (0, b]$ be a sequence of non-negative real numbers for some positive real number $b$, $b_0 > 0$ and $p \in (0, 1)$ a rational number. Then,
	\begin{align*}
		\sum_{i=1}^{n} \frac{b_i}{ \left(b_0 + \sum_{j=1}^{i-1} b_j \right)^{p} } \leq \frac{b}{(b_0)^p} + \frac{2}{1-p} \left(b_0 +  \sum_{i=1}^{n} b_i \right)^{1-p}
	\end{align*}
	
	The proof of this lemma relies on the arguments of Lemma A.1 from \citep{bach2019universal} and makes use of Lemma~\ref{lem:technical-1} we proved earlier. We consider two cases for the proof depending on whether $b_0 \leq b$ or $b_0 \geq b$.
	
	\textbf{Case 1 : ${b_0 \geq b}$}.
	\begin{align*}
		\sum_{i=1}^{n} \frac{b_i}{ \left(b_0 + \sum_{j=1}^{i-1} b_j \right)^{p} } &\leq \sum_{i=1}^{n} \frac{b_i}{ \left(b + \sum_{j=1}^{i-1} b_j \right)^{p} } \\
		&\leq \sum_{i=1}^{n} \frac{b_i}{ \left(\sum_{j=1}^{i} b_j \right)^{p} } \\
		&\leq \frac{1}{1-p} \left( \sum_{i=1}^{n} b_i \right)^{1-p}\\
		&\leq \frac{b}{(b_0)^p} + \frac{2}{1-p} \left( b_0 + \sum_{i=1}^{n} b_i \right)^{1-p}\\
	\end{align*}
	
	\textbf{Case 2 : ${b_0 \leq b}$}.\\
	Let us denote a time variable
	$$
	T_0 = \min \left\{ i \in [n] : \sum_{j=1}^{i-1} b_j \geq b \right\}
	$$
	Then, we could separate the summation as
	\begin{align*}
		\sum_{i=1}^{n} \frac{b_n}{ (b_0 + \sum_{j=1}^{i-1} b_j )^{p} } &= \sum_{i=1}^{T_0 - 1} \frac{b_n}{ (b_0 + \sum_{j=1}^{i-1} b_j )^{p} } + \sum_{i=T_0}^{n} \frac{b_n}{ (b_0 + \sum_{j=1}^{i-1} b_j )^{p} } \\
		&\leq \frac{1}{(b_0)^p} \sum_{i=1}^{T_0 - 1} b_n + \sum_{i=T_0}^{n} \frac{b_n}{ (\frac{1}{2} \sum_{j=1}^{i-1} b_j + \frac{1}{2} \sum_{j=1}^{i-1} b_j )^{p} } \\
		&\leq \frac{b}{(b_0)^p} + \sum_{i=T_0}^{n} \frac{b_n}{ (\frac{1}{2} b + \frac{1}{2} \sum_{j=1}^{i-1} b_j )^{p} } \tag{Use definition of $T_0$}\\
		&\leq \frac{b}{(b_0)^p} + 2 \sum_{i=T_0}^{n} \frac{b_n}{ (\sum_{j=1}^{i} b_j )^{p} } \tag{Use $b_i \leq b$} \\
		&\leq \frac{b}{(b_0)^p} + \frac{2}{1-p} \left( \sum_{i=T_0}^{n} b_i \right)^{1-p} \tag{Use Lemma~\ref{lem:technical-1}}\\
		&\leq \frac{b}{(b_0)^p} + \frac{2}{1-p} \left( b_0 + \sum_{i=1}^{n} b_i \right)^{1-p}
	\end{align*}

\end{proof}

\newpage
\subsection{Numerical Results}
In this section we provide numerical performance of \stormp for a multi-class classification task.
Specifically, we train ResNet34 architecture on CIFAR10 dataset using SGD with momentum, \storm and \stormp, as well as AdaGrad and Adam.
We implemented the whole setup in \emph{pytorch}~\cite{pytorch2019} retrieving the model and the dataset from \emph{torchvision} package.
We executed the experiments on NVIDIA DGX infrastructure. Specifically, our code ran on NVIDIA A100-SXM4-40GB graphics card.
We use mini-batches of 100 samples both for training and testing, whiling using the default train/test data split provided in the package.

To be fair to all methods, we fixed all the parameters to their default value except for the learning rate.
Then, we executed an initial learning rate sweep over the same logarithmic range for all the algorithms.
All methods use a constant learning rate schedule without any heuristic strategies.
All methods are run with the best performing initial learning rate after tuning and the results for a single run are presented in Figure~\ref{fig:resnet}.
In the plots, epoch refers to the number of passes over dataset, \emph{not} number of gradient calls.
Per iteration cost of \storm and \stormp are twice that of other methods with respect to forward/backward passes.

\begin{figure}[H]
\centering
\begin{subfigure}[t]{.48\textwidth}
  \centering
  \includegraphics[width=.95\linewidth]{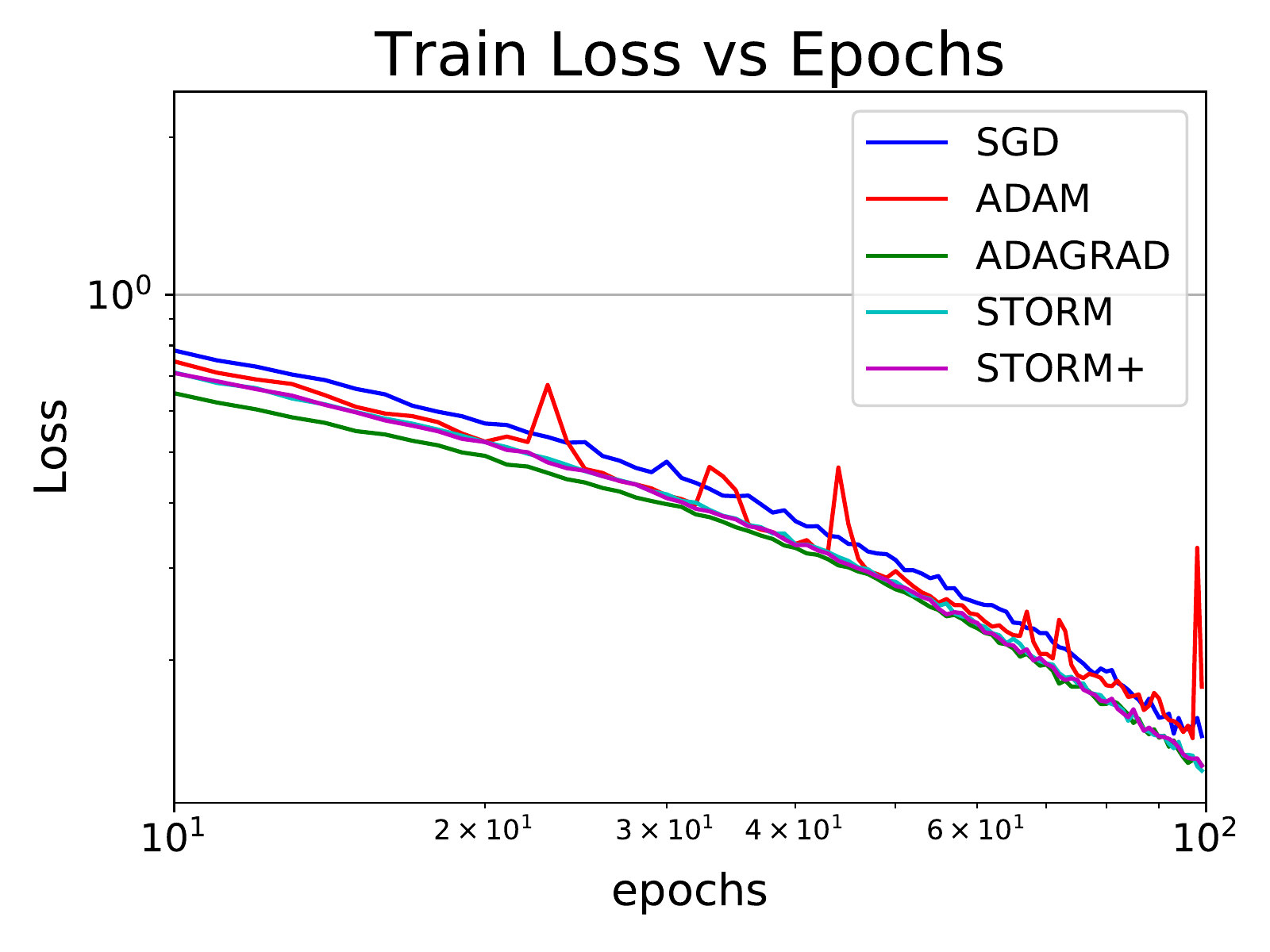}  
  \caption{Training loss in log-log scale}
  \label{fig:sub-train-loss}
\end{subfigure}%
\begin{subfigure}[t]{.48\textwidth}
  \centering
  \includegraphics[width=.95\linewidth]{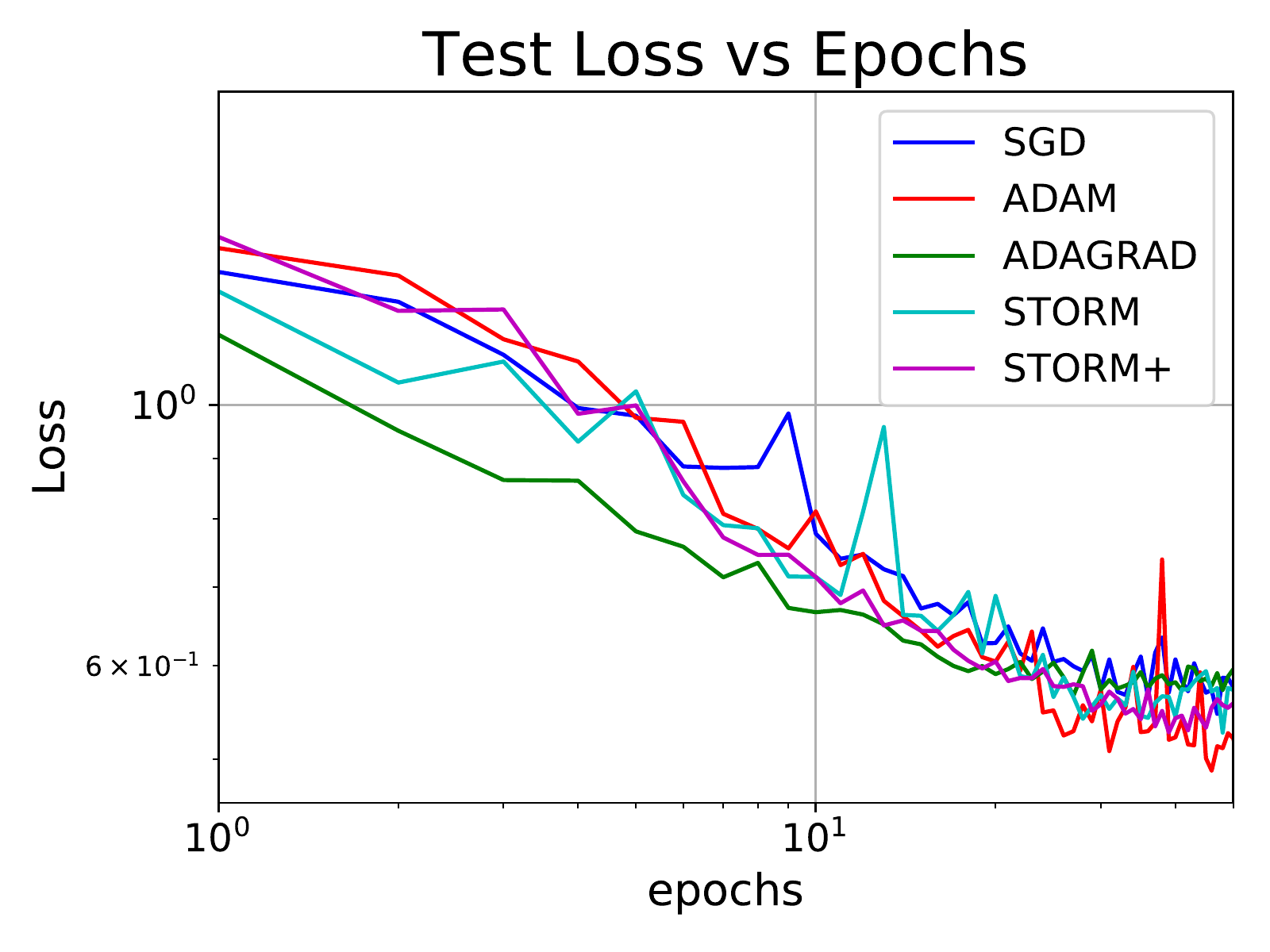}  
  \caption{Test loss in log-log scale}
  \label{fig:sub-test-loss}
\end{subfigure} \hspace{1mm}\vspace{6mm}

\begin{subfigure}[t]{.48\textwidth}
  \centering
  \includegraphics[width=.95\linewidth]{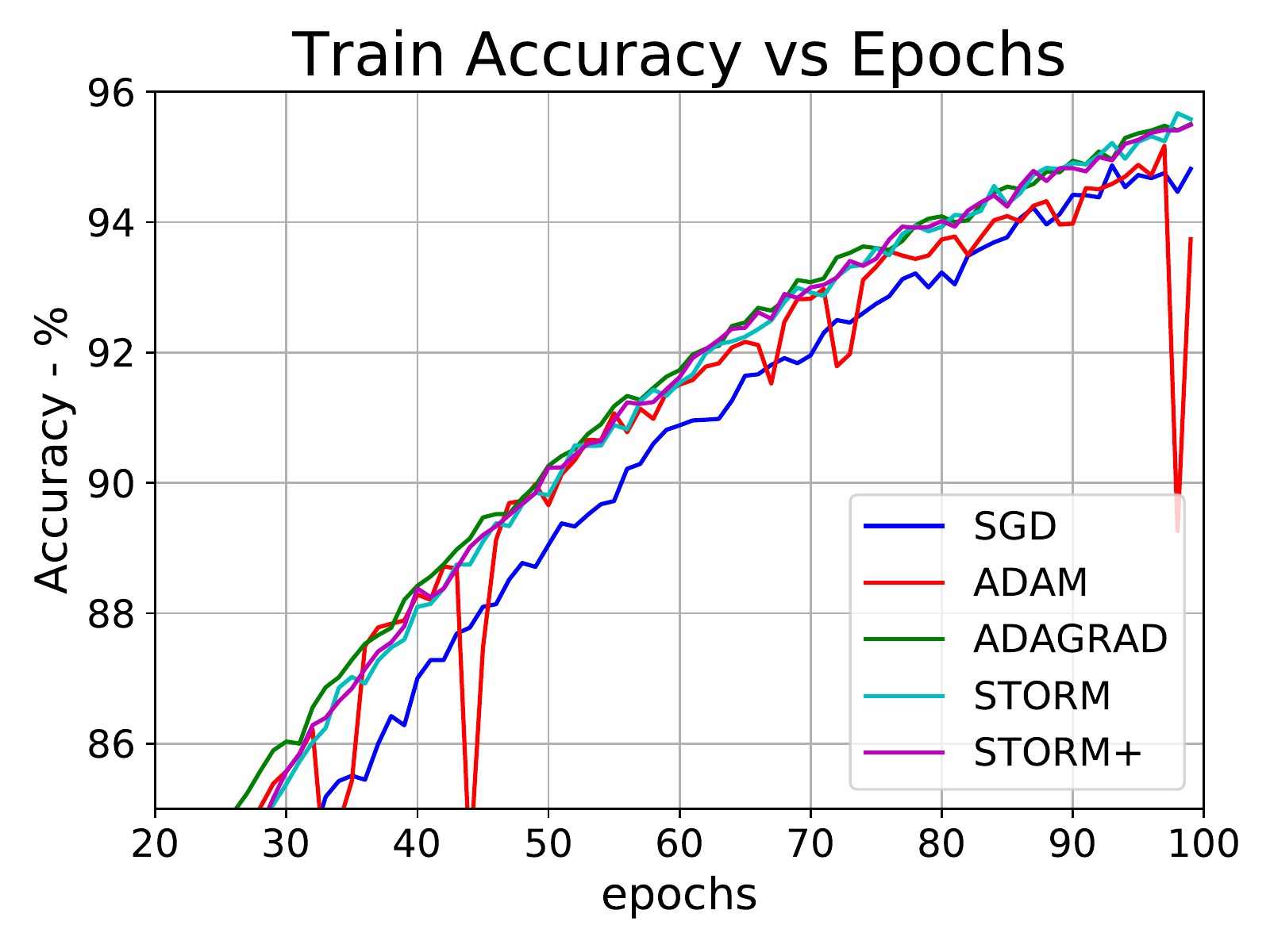}  
  \caption{Training accuracy}
  \label{fig:sub-train-acc}
\end{subfigure}%
\begin{subfigure}[t]{.48\textwidth}
  \centering
  \includegraphics[width=.95\linewidth]{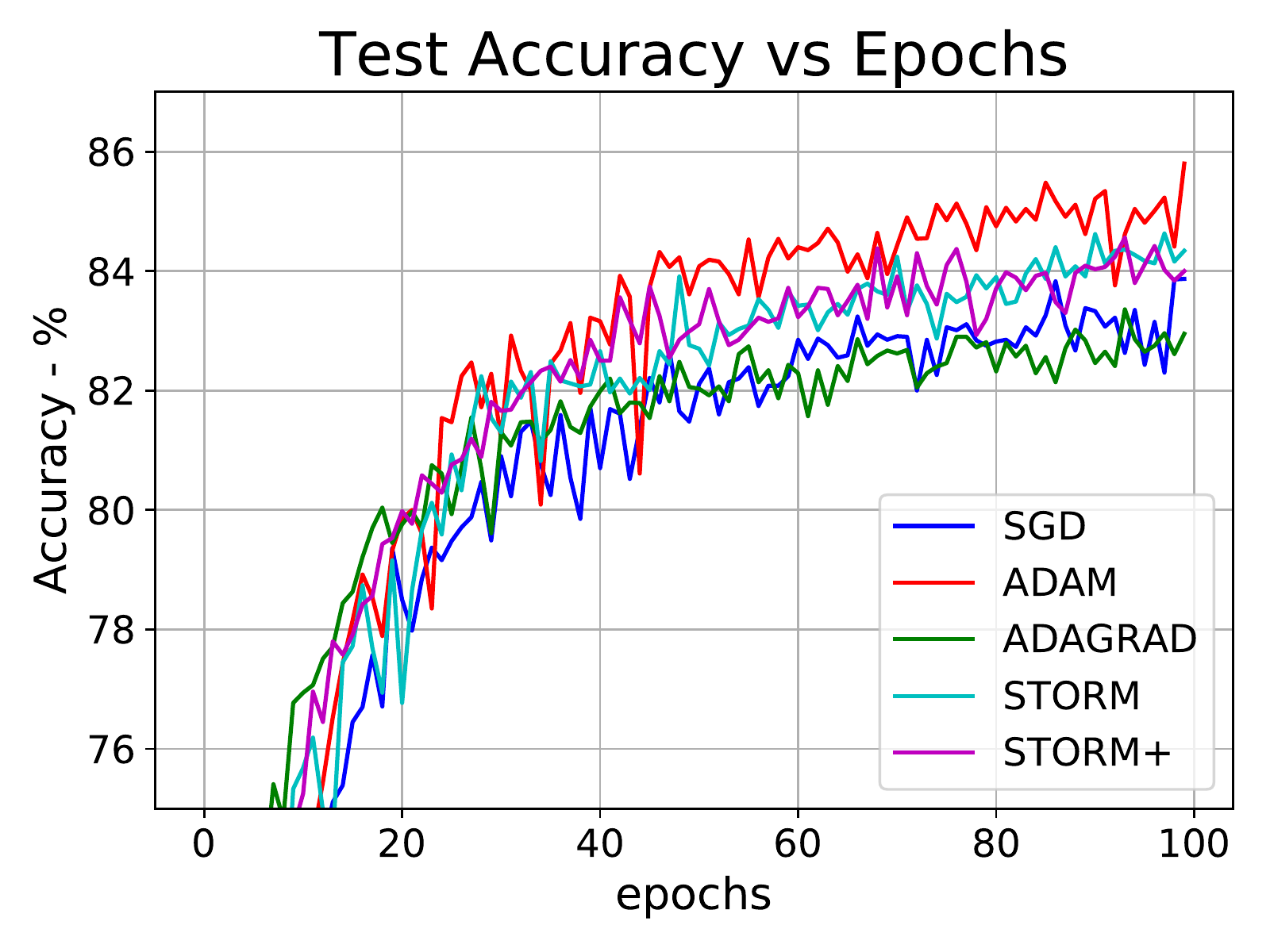}  
  \caption{Test accuracy}
  \label{fig:sub-test-acc}
\end{subfigure}
\caption{Comparison of SGD and adaptive methods, Resnet34 on CIFAR10}
\label{fig:resnet}
\end{figure}

The results do not exhibit a noticeable practical advantage for \stormp, however, they verify that it achieves comparable performance with respect to other adaptive methods. The performance of \storm and \stormp are quite close to each other under all 4 metrics. In the training phase, \storm and \stormp seem to outperform other methods by a small margin, both in training accuracy and training loss.
Adam and SGD seem to achieve a relatively small training accuracy and relatively large training loss compared to other methods.
In the test phase, we observe a different picture where Adam generalizes slightly better than other methods, followed by \storm and \stormp as we could see in Figure~\ref{fig:sub-test-acc}.

In terms of ease of tuning, provably, \stormp does not require the knowledge of \emph{any} problem parameters to operate and only initial step-size tuning suffices, while \storm additionally needs to tune the initial momentum parameter as, in theory, it requires the knowledge of smoothness and bound on the gradients.
Adam would need tuning for its moving average parameters $\beta_1$ and $\beta_2$, while SGD has a momentum parameter which is subject to a search over admissible values.
Similar to \stormp, AdaGrad does not require tuning beyond initial learning rate. 

\fi

\end{document}